\theoremstyle{plain}
\newtheorem{theorem}{\bf Theorem}[section]
\newtheorem{lemma}[theorem]{\bf Lemma}
\newtheorem{proposition}[theorem]{\bf Proposition}
\theoremstyle{definition}
\newtheorem{definition}[theorem]{Definition}
\newtheorem{example}[theorem]{\bf Example}
\newtheorem{remark}[theorem]{\bf Remark}
\newtheorem{problem}[theorem]{\bf Problem}
\newtheorem{question}[theorem]{\bf Question}
\newcommand{\eqa}[1]{
\begin{align*}
#1
\end{align*}}
\newcommand{\nai}[2]{\langle #1,#2\rangle}
\newcommand{\dom}[1]{{{\rm{dom}}{(#1)}}}
\newcommand{\wlimn}[1]{{\rm{w}}\text{-}\lim_{n\to \infty}}
\title{On Polish Groups of Finite Type}
\author{Hiroshi Ando$^{1,2}$\\
$^{1\ }$University of Copenhagen\\
Universitetsparken 5, 2100 K\o benhavn \O, Denmark\\
$^{2\ }$Research Institute for Mathematical Sciences, Kyoto University\\
Kyoto, 606-8502, Japan\\
E-mail: andonuts@kurims.kyoto-u.ac.jp\\
\\
Yasumichi Matsuzawa$^{3,4}$\\
$^{3\ }$Mathematisches Institut, Universit\"{a}t Leipzig\\
Johannisgasse 26, 04103, Leipzig, Germany\\
$^{4\ }$Department of Mathematics, Hokkaido University\\
Kita 10, Nishi 8, Kita-ku, Sapporo, 060-0810, Japan\\
E-mail: matsuzawa@math.sci.hokudai.ac.jp
}
\begin{document}
\maketitle
\begin{abstract}
Sorin Popa initiated the study of Polish groups which are embeddable into the unitary group of a separable finite von Neumann algebra. Such groups are called of finite type or said to belong to the class $\mathscr{U}_{\text{fin}}$. We give necessary and sufficient conditions for Polish groups to be of finite type, and construct exmaples of such groups from I$_{\infty}$ and II$_{\infty}$ von Neumann algebras. We also discuss permanence properties of finite type groups under various algebraic operations. Finally we close the paper with some questions concerning Polish groups of finite type. 
\end{abstract}

\noindent
{\bf Keywords} bi-invariant metric, class $\mathscr{U}_{\text{fin}}$, finite type group, Polish group, positive definite function, SIN-group, II$_1$ factor

\medskip

\noindent
{\bf Mathematics Subject Classification (2000)} 46L10, 54H11, 43A35 

\medskip

\pagebreak 
\tableofcontents

\section{Introduction}
\label{intro}
In this paper we consider the following problem. Denote by $\mathcal{U}(M)$ the unitary group of a von Neumann algebra $M$. 
\begin{problem}
Determine the necessary and sufficient condition for a Polish group $G$ to be isomorphic as a topological group onto a strongly closed subgroup of some $\mathcal{U}(M)$, where $M$ is a separable finite von Neumann algebra.
\end{problem}
S. Popa defined a Polish group to be of finite type if it has this embedding property. Denote by $\mathscr{U}_{\text{fin}}$ the class of all finite type Polish groups. He initiated the study of this class in an attempt to enrich the study of rapidly developing cocycle superrigidity theory (cf. \cite{Furman, Peterson, Popa}). In particular, he proposed in \cite{Popa} the problem of studying and characterizing  the class $\mathscr{U}_{\text{fin}}$. 

Secondly, this problem is motivated from our previous work \cite{AndoMats} on infinite-dimensional Lie algebras associated with such groups: Let $M$ be a finite von Neumann algebra on a Hilbert space $\mathcal{H}$. Let $G$ be a strongly closed subgroup of $\mathcal{U}(M)$ and $\overline{M}$ be a set of all densely defined closed operators on $\mathcal{H}$ which are affiliated to $M$. It is proved that the set 
\[\text{Lie}(G):=\{A^*=-A\in \overline{M}; e^{tA}\in G\text{ for all }t\in \mathbb{R}\}\]
is a complete topological Lie algebra with respect to the strong resolvent topology (see also the related work of D. Beltita \cite{Daniel}).  
Since these Lie algebras turn out to be non-locally convex in general when $M$ is non-atomic, they are quite exotic as a Lie algebra and their properties are still unknown. Therefore it would be interesting to find non-trivial examples of such groups. 


We give an answer in Theorem \ref{yasu characterization of finite type} to the above Problem by the aid of positive definite functions on groups and their GNS representations, and characterize locally compact groups or amenable Polish groups of finite type via compatible bi-invariant metrics in Proposition \ref{yasu locally compact} and Theorem \ref{yasu amenable group} (the former is known, but we give a new proof). 
Combining with Popa's result \cite{Popa}, Theorem \ref{yasu characterization of finite type} gives a necessary and sufficient condition for a Polish group to be isomorphic onto a closed subgroup of the unitary group of a separable II$_1$ factor. 
We then give examples of Polish groups $G$ of finite type using noncommutative integration of E. Nelson \cite{Nelson}. 
Finally we discuss some hereditary properties of finite type groups and pose some questions concerning Polish groups of finite type.\\ \\
\textbf{Notation.} In this paper we often say a von Neuman algebra $M$ is {\it separable} if it has a separable predual, especially when the Hilbert space on which $M$ acts is implicit. This is known to be equivalent to the condition that $M$ has a faithful representation on a separable Hilbert space. We denote by $\text{Proj}(M)$ the lattice of all projections in $M$. 
A von Neumann algebra is said to be {\it finite} if it admits no non-unitary isometry.
When we consider a group $G$, its identity is denoted as $e_G$. However, we also use $1$ as the identity when we consider a concrete subgroup of the unitary group of a von Neumann algebra. We always regard the unitary group of a von Neumann algebra as a topological group with the strong operator topology.      
\section{Polish Groups of Finite Type and its Characterization}
In this section, we characterize Polish groups of finite type via
positive definite functions. We then characterize when locally compact
groups or amenable Polish groups are of finite type via compatible bi-invariant metrics. To this end, we
review notions of SIN-groups, bi-invariant metrics and unitary
representability. 
\subsection{Polish Groups of Finite Type}
Recall that a Polish space is a separable completely metrizable
topological space, and a Polish group is a topological group whose
topology is Polish.

We now introduce finite type groups after Popa \cite{Popa}.
\begin{definition}\label{yasu definition of finite type}
A Hausdorff topological group is called of {\it finite type} if it
is isomorphic as a topological group onto a closed subgroup of the
unitary group of a finite von Neumann algebra.
\end{definition}
\begin{remark}
Popa \cite{Popa} requires the topological group of finite
type to be Polish, whereas our definition of finiteness does not
require any countability. 
We will show in Theorem \ref{yasu characterization of finite type} that a Polish group $G$ of finite type in our sense coincides with Popa's definition of finite type group. 
That is, $G$ is isomorphic onto a closed subgroup of the unitary group of a finite von Neumann algebra acting on a separable Hilbert space.
\end{remark}
All of second countable locally  compact Hausdorff groups, 
the unitary group of a von Neumann algebra acting on a separable Hilbert space 
are Polish groups.
Furthermore, separable Banach spaces are Polish groups as an additive group.  
We denote the class of all Polish groups of finite type by $\mathscr{U}_{{\rm fin}}$.





Note that since a von Neumann algebra is finite if and only if its unitary group is complete with respect to the left uniform structure, 
Polish groups of finite type are necessarily complete.
Thus we have the following simple consequence.

\begin{proposition}\label{yasu finite iff fintie}
The unitary group of a von Neumann algebra $M$ acting on a separable Hilbert space
is of finite type if and only if $M$ is finite.
\end{proposition}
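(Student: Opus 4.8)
Here is how I would approach the proof. The statement is an equivalence, and I would treat the two implications separately.

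Sufficiency is essentially a tautology: if $M$ is finite, then $\mathcal{U}(M)$ is a strongly closed subgroup of the unitary group of the finite von Neumann algebra $M$ itself---indeed it is the whole group---so the identity map witnesses that $\mathcal{U}(M)$ is of finite type. No work is required here beyond unwinding Definition \ref{yasu definition of finite type}.

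For necessity, I would exploit the characterization recalled just above the statement: a von Neumann algebra is finite if and only if its unitary group is complete with respect to the left uniform structure. Suppose $\mathcal{U}(M)$ is of finite type, so that there is a topological group isomorphism $\varphi$ of $\mathcal{U}(M)$ onto a strongly closed subgroup $G$ of $\mathcal{U}(N)$ for some finite von Neumann algebra $N$. The plan is to show that $\mathcal{U}(M)$ must then be complete in its own left uniform structure, whence $M$ is finite by the cited characterization.

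The argument proceeds in three steps. First, since $N$ is finite, $\mathcal{U}(N)$ is complete with respect to its left uniform structure. Second, $G$ is closed in $\mathcal{U}(N)$, and a closed subgroup of a group that is complete in the left uniformity is itself complete in the induced (equivalently, its own) left uniformity; this is the standard fact that a closed subset of a complete uniform space is complete. Third, the left uniform structure of a topological group is determined by the group operations and the topology alone---its basic entourages being $\{(x,y): x^{-1}y \in V\}$ as $V$ ranges over neighborhoods of the identity---so any topological group isomorphism carries the left uniformity of its domain onto that of its range and therefore preserves left-completeness. Applying this to $\varphi$ transfers completeness from $G$ back to $\mathcal{U}(M)$, and the characterization then forces $M$ to be finite.

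The only point requiring genuine care, rather than the cited black box, is the third step: one must verify that $\varphi$ being simultaneously a group isomorphism and a homeomorphism really does identify the two left uniformities, i.e. that left-completeness is a topological-group invariant and not merely a property of the underlying uniform space. Once this is established the proof is immediate, and everything reduces to the already-quoted equivalence between finiteness of a von Neumann algebra and completeness of its unitary group in the left uniform structure.
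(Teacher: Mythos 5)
Your proof is correct and takes essentially the same route as the paper: the paper states this proposition as an immediate consequence of the quoted equivalence between finiteness of a von Neumann algebra and left-uniform completeness of its unitary group, noting that Polish groups of finite type are necessarily complete (i.e., completeness passes to closed subgroups and is a topological-group invariant). You merely spell out explicitly the transfer-of-completeness steps that the paper leaves implicit, including the trivial sufficiency direction.
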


Another examples of Polish groups of finite type are given later.

\subsection{Positive Definite Functions}

A complex valued function $f$ on a Hausdorff topological group $G$ is called {\it positive definite} 
if for all $g_1,\cdot\cdot\cdot,g_n\in G$ and for all $c_1,\cdot\cdot\cdot,c_n\in\mathbb{C}$,
\begin{equation*}
\sum_{i,j=1}^{n}\bar{c_i}c_jf(g_i^{-1}g_j)\geq 0
\end{equation*}
holds. 
Moreover if a complex valued function $f$ is invariant under inner automorphisms, 
that is
\begin{equation*}
f(hgh^{-1}) = f(g), \ \ \ \ \forall g,h\in G,
\end{equation*}
then $f$ is called {\it a class function}.

It is well-known that there is an one-to-one correspondence between 
the set of all continuous positive definite functions on a topological group 
and the set of unitary equivalence classes of all cyclic unitary representations of it.
more precisely, for each continuous positive definite function $f$ on a topological group $G$,
there exists a triple $(\pi_f,\mathcal{H}_f,\xi_f)$ consisting of a cyclic unitary representation $\pi_f$ in a Hilbert space $\mathcal{H}_f$
and a cyclic vector $\xi_f$ in $\mathcal{H}_f$ such that 
\begin{equation*}
f(g) = \nai{\xi_f}{\pi_f(g)\xi_f}, \ \ \ \ g\in G,
\end{equation*}
and this triple is unique up to unitary equivalence.
This triple is called the {\it GNS triple} associated to $f$.
Note that if $G$ is separable, then so is $\mathcal{H}_f$.

The GNS triple is of the following form for each continuous positive definite class function.

\begin{lemma}\label{yasu GNS for invariant positive definite functions}
Let $f$ be a continuous positive definite class function on a topological group $G$ 
and $(\pi,\mathcal{H},\xi)$ be its GNS triple.
Then the von Neumann algebra $M$ generated by $\pi(G)$ is finite and the linear functional
\begin{equation*}
\tau(x) := \nai{\xi}{x\xi}, \ \ \ \ x\in M,
\end{equation*}
is a faithful normal tracial state on $M$.
In particular $M$ is countably decomposable.
\end{lemma}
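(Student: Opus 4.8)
The plan is to verify the two substantive claims—that $\tau$ is tracial and faithful—after which finiteness and countable decomposability follow formally. First I would record that $\tau$ is automatically a normal state: normalizing so that $f(e_G) = \langle \xi, \xi\rangle = 1$, it is the vector functional $x \mapsto \langle \xi, x\xi\rangle$ associated to $\xi$, and such functionals are $\sigma$-weakly continuous and positive with $\tau(1) = 1$.

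For traciality I would argue on generators first. Put $A := \mathrm{span}\{\pi(g) : g\in G\}$; since $\pi(g)\pi(h) = \pi(gh)$, $\pi(g)^* = \pi(g^{-1})$ and $\pi(e_G) = 1$, the set $A$ is a unital $*$-subalgebra of $M$, and by von Neumann's bicommutant theorem it is $\sigma$-weakly dense in $M = \pi(G)''$. The class-function hypothesis yields $f(gh) = f(hg)$ for all $g,h\in G$ (write $gh = g(hg)g^{-1}$ and apply invariance), which is precisely $\tau(\pi(g)\pi(h)) = \tau(\pi(h)\pi(g))$; by bilinearity $\tau(ab) = \tau(ba)$ for all $a,b\in A$.

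The main step is then to propagate this identity to all of $M$ by a two-stage normality argument. Fixing $b\in A$, the maps $x\mapsto \tau(xb) = \langle \xi, x(b\xi)\rangle$ and $x\mapsto \tau(bx) = \langle b^*\xi, x\xi\rangle$ are vector functionals, hence normal, and they coincide on the $\sigma$-weakly dense algebra $A$, so they coincide on $M$. Fixing now $x\in M$ and repeating the argument with the normal functionals $y\mapsto \tau(xy)$ and $y\mapsto \tau(yx)$, which agree on all of $A$, gives $\tau(xy) = \tau(yx)$ for every $y\in M$. I expect this density-plus-normality passage, routine though it is, to be the technical heart of the proof.

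Finally, faithfulness, finiteness and $\sigma$-finiteness all follow from traciality. If $\tau(x^*x) = \|x\xi\|^2 = 0$, then for each $u = \pi(g)$ traciality gives $\|xu\xi\|^2 = \tau(u^*x^*xu) = \tau(x^*x\,uu^*) = \tau(x^*x) = 0$, so $x$ annihilates the dense subspace $A\xi$ and hence $x = 0$; thus $\tau$ is faithful. For finiteness I would use the paper's definition directly: if $v\in M$ is an isometry then $\tau(1 - vv^*) = \tau(1) - \tau(v^*v) = 0$, and faithfulness forces the projection $1 - vv^*$ to vanish, so $v$ is unitary and $M$ admits no non-unitary isometry. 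Countable decomposability is immediate, since any orthogonal family of nonzero projections $\{p_i\}$ satisfies $\sum_i \tau(p_i)\le \tau(1) = 1$ with each summand strictly positive, forcing the family to be countable.
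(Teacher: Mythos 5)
Your proposal is correct and follows essentially the same route as the paper: traciality is verified on the $\sigma$-weakly dense $*$-algebra spanned by $\pi(G)$ using the class-function identity $f(gh)=f(hg)$, extended to $M$ by normality, and faithfulness is obtained from the same computation $\|x\pi(g)\xi\|^2=\tau(\pi(g)^*x^*x\pi(g))=0$ together with cyclicity of $\xi$. The only difference is that you spell out details the paper leaves implicit --- the two-stage density-plus-normality passage, and the standard deductions of finiteness (no non-unitary isometry) and countable decomposability from the existence of a faithful normal tracial state --- all of which are carried out correctly.
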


\begin{proof}
It is clear that $\tau$ is a normal state on $M$. Since $f$ is a class function, it is easy to see that $\tau$ is tracial on the strongly dense *-subalgebra of $M$ spanned by $\pi (G)$. Therefore by normality, $\tau$
is tracial on $M$. Therefore we have only to check the faithfulness of $\tau$. 
Assume $\tau(x^*x)=0$.
Since $\tau$ is a trace, we have
\begin{equation*}
\|x\pi(g)\xi\|^2 = \tau(\pi(g)^*x^*x\pi(g)) = 0,
\end{equation*}
for all $g\in G$.
By  the cyclicity of $\xi$, $x$ must be $0$.
\end{proof} 

\begin{example}[I. J. Schoenberg \cite{Schoenberg}]
\label{yasu hilbert space positive definite function}
Let $\mathcal{H}$ be a complex Hilbert space.
Note that $\mathcal{H}$ is an additive group.
Then a function $f$ defined by $f(\xi):=e^{-\|\xi\|^2}\ (\xi\in\mathcal{H})$
is a positive definite (class) function on $\mathcal{H}$. 
\end{example}

\begin{example}[I. J. Schoenberg \cite{Schoenberg}]
\label{yasu l^p positive definite}
For all $1\leq p \leq 2$ a function $f_p$ defined by 
$f_p(a):=e^{-\|a\|_p^p}\ (a\in l^p)$ is a positive definite (class) function 
on a separable Banach space $l^p$.
\end{example}

For more details about positive definite class functions, see \cite{Takeshi}.

\subsection{The First Characterization}

We now characterize Polish groups of finite type.

\begin{theorem}\label{yasu characterization of finite type}
For a Polish group $G$ the following are equivalent.

\begin{list}{}{}
\item[(i)] $G$ is of finite type.
\item[(ii)] $G$ is isomorphic as a topological group onto a closed subgroup of the unitary group of a finite von Neumann algebra acting on a separable Hilbert space. 
\item[(iii)] A family $\mathcal{F}$ of continuous positive definite class functions on $G$ 
generates a neighborhood basis of the identity $e_G$ of $G$.
That is, for each neighborhood $V$ of the identity, there are functions $f_1,\cdot\cdot\cdot,f_n\in \mathcal{F}$
and open sets $\mathcal{O}_1,\cdot\cdot\cdot,\mathcal{O}_n$ in $\mathbb{C}$ such that
\begin{equation*}
e_G\in\bigcap_{i=1}^{n}f_i^{-1}(\mathcal{O}_i)\subset V.
\end{equation*} 
\item[(iv)] There exists a positive, continuous positive definite class function which generates a neighborhood basis of the identity of $G$.
\item[(v)] A family $\mathcal{F}$ of continuous positive definite class functions on $G$ separates the identity of $G$ and closed subsets $A$ with $A\not\ni e_G$.
That is, for each closed subset $A$ with $A\not\ni e_G$, 
there exists a continuous positive definite class function $f\in\mathcal{F}$ such that
\begin{equation*}
\sup_{x\in A}|f(x)| < |f(e_G)|.
\end{equation*}
\item[(vi)] There exists a positive continuous positive definite class function which separates the identity of $G$ 
and closed subsets $A$ with $A\not\ni e_G$.
\end{list}
\end{theorem}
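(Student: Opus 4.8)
The plan is to establish the cycle $(\mathrm{ii})\Rightarrow(\mathrm{i})\Rightarrow(\mathrm{iii})\Rightarrow(\mathrm{ii})$ together with the chain $(\mathrm{ii})\Rightarrow(\mathrm{iv})\Rightarrow(\mathrm{vi})\Rightarrow(\mathrm{v})\Rightarrow(\mathrm{iii})$, after which all six conditions collapse into one another. Several arrows are purely formal. The inclusion $(\mathrm{ii})\Rightarrow(\mathrm{i})$ is immediate. For the superlevel-set manipulations, if $f$ is a positive continuous positive definite class function, then $f(e_G)>0$ and, after normalising $f(e_G)=1$, a closed set $A\not\ni e_G$ has open complement $V=G\setminus A\ni e_G$; if $f$ generates a neighbourhood basis there is $\delta>0$ with $\{g:f(g)>1-\delta\}\subseteq V$, whence $\sup_{A}|f|\le 1-\delta<f(e_G)$, giving $(\mathrm{iv})\Rightarrow(\mathrm{vi})$, while $(\mathrm{vi})\Rightarrow(\mathrm{v})$ is the singleton family. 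Conversely, for $(\mathrm{v})\Rightarrow(\mathrm{iii})$, given a neighbourhood $V$ of $e_G$ set $A=G\setminus V$; choosing $f\in\mc F$ with $s:=\sup_A|f|<|f(e_G)|$, the open set $\mc O=\{z\in\mbb C:|z|>s\}$ satisfies $e_G\in f^{-1}(\mc O)\subseteq V$, so $\mc F$ generates a neighbourhood basis.

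For the analytic direction $(\mathrm{i})\Rightarrow(\mathrm{iii})$, I would pass to $N:=\pi(G)''$, which, being a von Neumann subalgebra of a finite algebra, is again finite and therefore carries a faithful family $\{\tau_\alpha\}$ of normal tracial states. Each $\varphi_\alpha(g):=\tau_\alpha(u_g)$ is then a continuous positive definite class function by the same traciality exploited in Lemma~\ref{yasu GNS for invariant positive definite functions}. The point to verify is that the seminorms given by $\|u_g-1\|_{2,\tau_\alpha}^2=2\bigl(1-\re{\varphi_\alpha(g)}\bigr)$ induce exactly the strong operator topology on the unitary group; this rests on the estimate $\|(u-1)x\xi_\alpha\|_{2}\le\|x\|\,\|u-1\|_{2,\tau_\alpha}$ together with faithfulness of the family, and it shows the $\varphi_\alpha$ generate a neighbourhood basis of $e_G$.

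To sharpen this to the \emph{single} positive function of $(\mathrm{iv})$, I would invoke $(\mathrm{ii})$: a finite von Neumann algebra acting on a separable Hilbert space is countably decomposable and hence carries one faithful normal tracial state $\tau$. Writing $\xi$ for the trace vector, the map $b(g)=(u_g-1)\xi$ satisfies $b(gh)=\pi(g)b(h)+b(g)$, so $\psi:=2(1-\re{\varphi})=\|b(\cdot)\|^2$ with $\varphi(g)=\tau(u_g)$ is a conditionally negative definite class function vanishing at $e_G$. By Schoenberg's theorem $f:=e^{-\psi}$ is a positive continuous positive definite class function, and $f(g)\to 1$ precisely when $\|u_g-1\|_2\to 0$, so $f$ generates the neighbourhood basis, establishing $(\mathrm{iv})$.

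The crux is the reverse construction $(\mathrm{iii})\Rightarrow(\mathrm{ii})$. Since $G$ is second countable I can extract from $\mc F$ a countable subfamily $\{f_k\}_{k\ge1}$ still generating a neighbourhood basis of $e_G$. Forming the direct sum GNS representation $\pi=\bigoplus_k\pi_{f_k}$ on the separable Hilbert space $\bigoplus_k\Hs_{f_k}$ and setting $M=\pi(G)''$, Lemma~\ref{yasu GNS for invariant positive definite functions} makes each $M_k=\pi_{f_k}(G)''$ finite with faithful normal trace $\tau_k$; then $M\subseteq\prod_k M_k$ is finite and $\sum_k 2^{-k}\tau_k$ restricts to a faithful normal trace on $M$. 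Injectivity and bicontinuity of $g\mapsto\pi(g)$ both follow from $f_k(g)=\nai{\xi_k}{\pi_{f_k}(g)\xi_k}$ and the fact that the $f_k$ generate the topology. \textbf{The main obstacle} is closedness of the image: here I would use the general fact that a Polish subgroup of a Polish group is closed, which applies once $\pi$ is known to be a homeomorphism onto its range, so that $\pi(G)$ is itself Polish in the subspace topology of the Polish group $\mc U(M)$. Verifying carefully that $\pi$ is a topological embedding — and hence that no limit unitaries lying outside $\pi(G)$ are adjoined — is where the real work lies; the remaining arrow $(\mathrm{ii})\Rightarrow(\mathrm{i})$ is trivial and closes the cycle.
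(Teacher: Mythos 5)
Your proposal is correct, and its load-bearing skeleton coincides with the paper's: the formal superlevel-set manipulations among (iii), (iv), (v), (vi) are exactly the trivial arrows the paper dispatches in one line (your normalization $f(e_G)=1$ and the observation $|f(g)|\le f(e_G)$ make them airtight); your (iii)$\Rightarrow$(ii) is the paper's proof verbatim, including the key closedness step, which the paper compresses into ``the image of $\pi$ is closed in $\mathcal{U}(M)$, as both $G$ and $\mathcal{U}(M)$ are Polish'' and which you correctly unpack as: a topological-group embedding makes $\pi(G)$ Polish in the subspace topology, and a Polish subgroup of a Polish group is closed. The genuine divergences are two local ones. First, for the single positive function of (iv), the paper proves (iii)$\Rightarrow$(iv) directly and stays inside positive-definite-function calculus: it normalizes a countable generating family $\{f_n\}$, forms $f_n'=e^{\mathrm{Re}(f_n)-1}=e^{-1}\sum_{k\ge 0}\frac{1}{k!}\left[\mathrm{Re}(f_n)\right]^k$ (positive definite termwise by Schur products), and takes $f=\sum_n 2^{-n}f_n'$; you instead route through (ii), pick the faithful normal tracial state $\tau$ of the countably decomposable algebra, and apply Schoenberg to the conditionally negative definite $\psi(g)=\|u_g-1\|_{2,\tau}^2=2\left(1-\mathrm{Re}\,\tau(u_g)\right)$ coming from the cocycle $b(g)=(u_g-1)\xi$. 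This is in fact the same device the paper itself deploys later in the proof of Theorem~\ref{3U(M)_2} (where $\varphi(u)=e^{-\|1-u\|_2^2}$ is declared positive definite ``by Schoenberg's theorem''); it buys a single canonical function attached to one trace, at the cost of invoking the cocycle/Schoenberg machinery and the compatibility of the $\|\cdot\|_2$-metric with the strong topology (Lemma~\ref{yasu finite type has a bi-invariant metric}), whereas the paper's summation trick is more elementary and needs no von Neumann algebra input beyond (iii). Second, in (i)$\Rightarrow$(iii) you pass to $N=\pi(G)''$ with a separating family of normal tracial states, where the paper decomposes $M=\bigoplus_{i\in I}M_i$ into countably decomposable summands and sets $f_i=\tau_i\circ\pi_i$; these are equivalent mechanisms (a separating trace family on a finite algebra comes from the center-valued trace, just as the direct-sum decomposition does), and both you and the paper leave the one nontrivial verification — that the trace 2-seminorms recover the strong operator topology on the unitary group, i.e., that these sets really form a neighborhood basis — at the same level of sketch, so there is no gap relative to the paper's own standard of detail.
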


\begin{proof}
(iv)$\Leftrightarrow$(vi)$\Rightarrow$(v)$\Rightarrow$(iii) and (ii)$\Rightarrow$(i) are trivial.

(iii)$\Rightarrow$(ii).
Since $G$ is first countable, there exists a countable subfamily $\{f_n\}_{n}$ of $\mathcal{F}$ which generates a neighborhood basis of the identity of $G$.
Let $(\pi_{n},\xi_n,\mathcal{H}_n)$ be the GNS triple associated to $f_n$ and $M_n$ be a von Neumann algebra generated by $\pi_n(G)$.
Since each $M_n$ is finite, the direct sum $M:=\bigoplus_{n}M_n$ is also finite and acts on a separable Hilbert space $\mathcal{H}:=\bigoplus_{n}\mathcal{H}_n$ (see the remark above Lemma \ref{yasu GNS for invariant positive definite functions}).
Put $\pi:=\bigoplus_{n}\pi_n$, 
then $\pi$ is an embedding of $G$ into $\mathcal{U}(M)$.
The image of $\pi$ is closed in $\mathcal{U}(M)$, as both $G$ and $\mathcal{U}(M)$ are Polish.

(i)$\Rightarrow$(iii).
Let $\pi$ be an embedding of $G$ into the unitary group of a finite von Neumann algebra $M$.
Since each finite von Neumann algebra is the direct sum of countably decomposable finite von Neumann algebras,
we can take of a family of countably decomposable finite von Neumann algebras $\{M_i\}_{i\in I}$ with $M=\bigoplus_{i\in I}M_i$.
In this case $\pi$ is also of the form $\pi=\bigoplus_{i\in I}\pi_i$, 
where each $\pi_i:G\rightarrow \mathcal{U}(M_i)$ is a continuous group homomorphism. 
Let $\tau_i$ be a faithful normal tracial state on $M_i$ and $(\rho_i,\xi_i,\mathcal{H}_i)$ be its GNS triple as a $C^{*}$-algebra.
Here each $\rho_i$ is an isomorphism from $M_i$ into $\mathbb{B}(\mathcal{H}_i)$ and 
\begin{equation*}
\tau_i(x) = \nai{\xi_i}{\rho_i(x)\xi_i}, \ \ \ \ x\in M_i,
\end{equation*}
holds.
Now set $f_i:=\tau_i\circ \pi_i$.
Then each $f_i$ is a continuous positive definite class functions on $G$ and 
$\{f_i\}_{i\in I}$ generates a neighborhood basis of the identity $e_G$ of $G$.

(iii)$\Rightarrow$(iv).
Let $\{f_n\}_{n}$ be a countable family of continuous positive definite class functions generating a neighborhood basis of the identity of $G$ with $f_n(e_G)=1$.
Set 
\begin{align*}
f_n'(g) &:= e^{\text{Re}(f_n(g))-1} \\
&=e^{-1}\sum_{k=0}^{\infty}\frac{1}{k!}\left[\text{Re}(f_n(g))\right]^k, 
\ \ \ \ g\in G,
\end{align*}
then $\{f_n'\}_n$ is not only a family of continuous positive definite class functions generating a neighborhood basis of the identity of $G$ with $f_n'(e_G)=1$ but also a family of positive functions.
Define a positive, continuous positive definite class function by  $f(g):=\sum_{n}f_n'(g)/2^n\ (g\in G)$.
It is easy to see that $f$ generates a neighborhood basis of the identity of $G$.
\end{proof}

\begin{remark} The proof of the above theorem is inspired by Theorem 2.1 of S. Gao \cite{Gao}.
\end{remark}
\begin{remark} Popa (Lemma 2.6 of \cite{Popa}) showed that a Polish group $G$ is of finite type if and only if it is isomorphic onto a closed subgroup of the unitary group of a separable II$_1$ factor. Therefore Theorem \ref{yasu characterization of finite type} gives a necessary and sufficient condition for a Polish group to be isomorphic onto a closed subgroup of the unitary group of a separable II$_1$ factor.  
\end{remark}
\subsection{SIN-groups and Bi-invariant Metrics}
To discuss further properties of finite type groups, we consider the following notions, say SIN-groups, bi-invariant metrics and unitarily representability. 

A neighborhood $V$ at the identity of a topological group $G$ is called {\it invariant} if it is invariant under all inner automorphisms, 
that is, $gVg^{-1}=V$ holds for all $g\in G$.
{\it A SIN-group} is a topological group which has a neighborhood basis of  the identity consisting of invariant identity neighborhoods.
Note that a locally compact Hausdorff SIN-group is unimodular.

{\it A bi-invariant metric} on a group $G$ is a metric $d$ which satisfies
\begin{equation*}
d(kg,kh) = d(gk,hk) = d(g,h), \ \ \ \ \forall g,h,k\in G.
\end{equation*}
It is known that a first countable Hausdorff topological group is SIN if and only if it admits a compatible bi-invariant metric. 

As Popa \cite{Popa} pointed out, one of the most important fact of Polish groups of finite type is 
an existence of a compatible bi-invariant metric.

\begin{lemma}\label{yasu finite type has a bi-invariant metric}
Each Polish group of finite type has a compatible bi-invariant metric.
In particular, it is SIN.
\end{lemma}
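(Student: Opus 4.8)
The plan is to extract from the finite-type hypothesis a supply of \emph{invariant} identity neighborhoods, thereby establishing that $G$ is SIN, and then to invoke the stated equivalence between the SIN property and the existence of a compatible bi-invariant metric for first countable Hausdorff groups. The substance of the lemma is already packaged inside Theorem \ref{yasu characterization of finite type}, so the argument is essentially formal.

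First I would apply Theorem \ref{yasu characterization of finite type}: since $G$ is of finite type, condition (iii) supplies a family $\mathcal{F}$ of continuous positive definite class functions generating a neighborhood basis of $e_G$. Concretely, the basic neighborhoods have the form $\bigcap_{i=1}^{n} f_i^{-1}(\mathcal{O}_i)$ with $f_1,\dots,f_n \in \mathcal{F}$ and $\mathcal{O}_1,\dots,\mathcal{O}_n \subseteq \mathbb{C}$ open.

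The key observation is that the preimage of any set under a class function is invariant under inner automorphisms. Indeed, if $f$ is a class function and $U = f^{-1}(\mathcal{O})$, then for every $h,g \in G$ we have $f(hgh^{-1}) = f(g)$, so $g \in U$ if and only if $hgh^{-1} \in U$; hence $hUh^{-1} = U$. Since a finite intersection of invariant sets is again invariant, each basic neighborhood $\bigcap_{i=1}^{n} f_i^{-1}(\mathcal{O}_i)$ is an invariant identity neighborhood. Therefore $G$ admits a neighborhood basis at $e_G$ consisting of invariant neighborhoods, which is precisely the statement that $G$ is SIN, giving the second assertion.

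Finally, since $G$ is Polish it is in particular first countable and Hausdorff. By the cited equivalence (a first countable Hausdorff topological group is SIN if and only if it admits a compatible bi-invariant metric), $G$ admits a compatible bi-invariant metric, which is the first assertion. I expect no serious obstacle: the only point requiring any care is verifying that the basic neighborhoods inherited from $\mathcal{F}$ really are invariant, and this reduces to the elementary computation above. The rest is a direct appeal to Theorem \ref{yasu characterization of finite type} and to the known SIN characterization quoted just before the lemma.
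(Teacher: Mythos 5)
Your argument is correct, and there is no circularity: the paper's proof of the implication (i)$\Rightarrow$(iii) in Theorem \ref{yasu characterization of finite type} nowhere uses this lemma, so you may legitimately quote it here. But your route is genuinely different from the paper's. The paper argues directly and constructively: since a compatible bi-invariant metric restricts to one on any closed subgroup, it suffices to treat $G=\mathcal{U}(M)$ for $M$ a finite von Neumann algebra on a separable Hilbert space; separability gives countable decomposability, hence a faithful normal tracial state $\tau$, and the explicit metric $d(u,v):=\tau((u-v)^*(u-v))^{1/2}=\norm{u-v}_2$ is bi-invariant by traciality and compatible with the strong operator topology on the (bounded) set $\mathcal{U}(M)$. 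You instead extract from condition (iii) of Theorem \ref{yasu characterization of finite type} a family of continuous positive definite class functions, observe that preimages under class functions are conjugation-invariant (your computation $hUh^{-1}=U$ is right, and the sets $\bigcap_{i=1}^n f_i^{-1}(\mathcal{O}_i)$ are open invariant neighborhoods of $e_G$), conclude SIN, and then invoke the metrization equivalence for first countable Hausdorff SIN-groups quoted just before the lemma. What the paper's approach buys is an explicit canonical metric --- the trace $2$-norm metric, which reappears in the construction of $\mathcal{U}(M)_2$ in Section 3 --- whereas your approach buys brevity and avoids any operator-algebraic computation, at the cost of being non-constructive and of leaning on the Birkhoff--Kakutani-type equivalence, which the paper states without proof. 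One stylistic remark: your proof establishes SIN first and derives the metric from it, reversing the logical order of the lemma's two assertions, which is harmless since the two properties are equivalent in the Polish setting.
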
 

\begin{proof}
It is enough to show that for every finite von Neumann algebra $M$ acting on a separable Hilbert space $\mathcal{H}$ the unitary group $\mathcal{U}(M)$ has a compatible bi-invariant metric.
For this let $\tau$ be a faithful normal tracial state on $M$.
Then a metric $d$ defined by
\begin{equation*}
d(u,v) :=\tau((u-v)^*(u-v))^{\frac{1}{2}}, \ \ \ \ u,v\in \mathcal{U}(M),
\end{equation*}
is a compatible bi-invariant metric on $\mathcal{U}(M)$.
\end{proof}

\subsection{Unitary Representability}
A Hausdorff topological group is called {\it unitarily representable}
if it is isomorphic as a topological group onto a subgroup 
of the unitary group of a Hilbert space.
All locally compact Hausdorff groups are unitarily representable via the left regular representation. 
It is clear that a Polish group of finite type is necessarily unitarily representable. 
The following characterization of unitary representability has been considered by specialists and can be seen in e.g., Gao \cite{Gao}.
\begin{lemma}\label{yasu characterization of unitarily representation}
For a Polish group $G$ the following are equivalent.
\begin{list}{}{}
\item[(i)] $G$ is unitarily representable.
\item[(ii)] There exists a positive, continuous positive definite function which separates the identity of $G$ 
and closed subsets $A$ with $A\not\ni e_G$.
\end{list}
\end{lemma}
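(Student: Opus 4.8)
The plan is to follow the template of the proof of Theorem \ref{yasu characterization of finite type}, stripped of the tracial and finiteness structure: the single positive definite function now plays the role previously played by a family coming from a trace, and matrix coefficients of an arbitrary unitary representation replace the trace itself. The two implications to establish are (i)$\Rightarrow$(ii) and (ii)$\Rightarrow$(i).

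For (i)$\Rightarrow$(ii), I would start from an embedding $\pi\colon G\to\mc{U}(\Hs)$ onto a subgroup of the unitary group of a Hilbert space $\Hs$ carrying the strong operator topology. Since $G$ is second countable and $\pi$ is a homeomorphism onto its image, I would extract a countable family of unit vectors $\{\xi_n\}_n\subset\Hs$ whose associated seminorms $g\mapsto\norm{\pi(g)\xi_n-\xi_n}$ generate the topology of $G$ at $e_G$; this is possible because the strong topology restricted to the separable set $\pi(G)$ is determined by countably many vectors. Each matrix coefficient $f_n(g):=\nai{\xi_n}{\pi(g)\xi_n}$ is a continuous positive definite function with $f_n(e_G)=1$, and $\re{f_n}$ is again positive definite. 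Applying the Schoenberg-type exponential trick exactly as in the proof of the implication (iii)$\Rightarrow$(iv), I would set $f_n'(g):=e^{\re{f_n(g)}-1}$, which is positive-valued, bounded by $1$ with $f_n'(e_G)=1$, and positive definite (the exponential series turns sums and Schur products of positive definite functions into a positive definite function). Then $f:=\sum_n f_n'/2^n$ is a single positive, continuous, positive definite function. To verify that $f$ separates, given a closed $A$ with $e_G\notin A$ I would choose a basic neighborhood $\bigcap_{i\le k}(f_i')^{-1}(\mathcal{O}_i)$ disjoint from $A$; then for each $x\in A$ some $f_i'(x)$ drops below $1$ by a fixed $\delta>0$, giving $\sup_{x\in A}|f(x)|\le 1-\delta/2^k<1=|f(e_G)|$.

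For (ii)$\Rightarrow$(i), I would pass to the GNS triple $(\pi_f,\Hs_f,\xi_f)$ of the given function, so that $f(g)=\nai{\xi_f}{\pi_f(g)\xi_f}$, and show $\pi_f$ is the desired embedding into $\mc{U}(\Hs_f)$. The key observation is that the separation property makes $f$ alone generate a neighborhood basis at $e_G$: for any open neighborhood $V$ of $e_G$, the complement $A:=G\setminus V$ is closed and avoids $e_G$, so with $c:=\sup_{x\in A}|f(x)|<|f(e_G)|$ the continuous function $f$ yields the open neighborhood $\{x:|f(x)|>c\}\subset V$ containing $e_G$. Injectivity of $\pi_f$ is immediate, since $\pi_f(g)=1$ forces $|f(g)|=|f(e_G)|$, which is impossible for $g\ne e_G$ by separation applied to $A=\{g\}$. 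Continuity of $\pi_f^{-1}$ at the identity follows because $\pi_f(h_\alpha)\to 1$ strongly gives $f(h_\alpha)\to f(e_G)$, hence $h_\alpha$ eventually lies in $\{x:|f(x)|>c\}\subset V$; homogeneity then upgrades this to a homeomorphism onto the image.

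The routine parts are the positive-definiteness bookkeeping and the continuity estimates. The one step deserving real care is in (i)$\Rightarrow$(ii): the reduction to a countable generating family of seminorms when $\Hs$ need not be separable, together with the verification that the exponential construction preserves positivity of the values and positive definiteness at the same time. Once the single function $f$ is in hand, the separation inequality and the GNS argument are essentially formal.
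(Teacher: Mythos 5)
Your proposal is correct, and since the paper offers no proof of this lemma at all---it simply refers to Gao \cite{Gao}---the natural benchmark is the paper's proof of Theorem \ref{yasu characterization of finite type}, of which your argument is precisely the class-function-free specialization: a countable family of matrix coefficients generating the topology at $e_G$ (your care about non-separable $\mathcal{H}$ is resolved exactly as you say, because $G$ is first countable and each basic strong-operator neighborhood involves only finitely many vectors), the Schoenberg-type exponentials $f_n'=e^{\mathrm{Re}(f_n(\cdot))-1}$, the dyadic sum $f=\sum_n f_n'/2^n$, and the GNS construction for the converse. The delicate points all check out: for unit vectors $\|\pi(g)\xi_n-\xi_n\|^2=2-2\,\mathrm{Re}(f_n(g))$, so the sets $\{g: f_i'(g)>1-\delta\}$ do form a neighborhood subbasis at $e_G$ and yield the bound $\sup_{x\in A}f(x)\le 1-\delta/2^k$, while in (ii)$\Rightarrow$(i) the singleton $\{g\}$ is closed in a Polish group, so separation legitimately gives injectivity, and your observation that separation makes $f$ alone generate a neighborhood basis at $e_G$ is exactly what makes the GNS representation a homeomorphism onto its image.
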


\subsection{Simple Examples}

All of the following examples are well-known.
The first three examples are locally compact groups. 

\begin{example}\label{yasu compact group}
Any compact metrizable group is a Polish group of fintie type.
This follows from the Peter-Weyl theorem.
\end{example}

\begin{example}\label{yasu abelian group}
Any abelian second countable locally compact Hausdorff group is a Polish group of finite type.
Indeed its left regular representation is an embedding into the unitary group of a Hilbert space and the von Neumann algebra generated by its image is commutative (in particular, finite).
\end{example}

\begin{example}\label{yasu discrete group}
Any countable discrete group is a Polish group of finite type.
For its left regular representation is an embedding into the unitary group of a finite von Neumann algebra.  
\end{example}

The following two examples suggest there are few other examples of locally compact groups of finite type. 

\begin{example}\label{yasu ax+b group}
Let 
$G:=\left\{\left(
\begin{array}{ccc}
x & y  \\
0 & 1  
\end{array}
\right)\in GL(2,\mathbb{K})\ ;\ x\in\mathbb{K}^{\times },y\in\mathbb{K}\right\}$ 
be the $ax+b$ group, where $\mathbb{K}=\mathbb{R}$ or $\mathbb{C}$.
By easy computations, we have
\begin{equation*}
\left(
\begin{array}{ccc}
a & b  \\
0 & 1  
\end{array}
\right)
\left(
\begin{array}{ccc}
x & y  \\
0 & 1  
\end{array}
\right)
\left(
\begin{array}{ccc}
a & b  \\
0 & 1  
\end{array}
\right)^{-1}
=
\left(
\begin{array}{ccc}
x & -bx+ay+b  \\
0 & 1  
\end{array}
\right),
\end{equation*}
so that the conjugacy class 
$C\left(\left(
\begin{array}{ccc}
x & y  \\
0 & 1  
\end{array}
\right)\right)$
of 
$\left(
\begin{array}{ccc}
x & y  \\
0 & 1  
\end{array}
\right)$
is
\begin{equation*}
C\left(\left(
\begin{array}{ccc}
x & y  \\
0 & 1  
\end{array}
\right)\right) = 
\begin{cases}
\ \ \ \ 
\left\{\left(
\begin{array}{ccc}
x & \sharp   \\
0 & 1  
\end{array}
\right)\ ;\ \sharp\in\mathbb{K}\right\} & (x\not= 1), \\
\\
\ \ \ \ 
\left\{\left(
\begin{array}{ccc}
1 & \sharp   \\
0 & 1  
\end{array}
\right)\ ;\ \sharp\in\mathbb{K}^{\times }\right\} & (x=1,\ y\not= 0), \\
\\
\ \ \ \ 
\left\{\left(
\begin{array}{ccc}
1 & 0   \\
0 & 1  
\end{array}
\right)\right\} & (x=1,\ y= 0).
\end{cases}
\end{equation*}
Thus for each $n\in\mathbb{N}$ there exists a matrix $h_n\in G$ such that
$h_ng_nh_n^{-1}=\left(
\begin{array}{ccc}
1 & 1   \\
0 & 1  
\end{array}
\right)$, where $g_n:=\left(
\begin{array}{ccc}
1 & 1/n   \\
0 & 1  
\end{array}
\right)$.
Clearly, 
$g_n\rightarrow 1$
and 
$h_ng_nh_n^{-1}\not\rightarrow 
1$.
This implies that the $ax+b$ group does not admit a compatible bi-invariant metric.
Hence it is not of finite type. 

\end{example}

\begin{example}\label{yasu SL and GL}
The special linear group $SL(n,\mathbb{K})\ (n\geq 2)$ is not of finite type since the map $\left(
\begin{array}{ccc}
a & b  \\
0 & 1  
\end{array}
\right)\mapsto \left(
\begin{array}{ccc}
a & b  \\
0 & a^{-1}  
\end{array}
\right)$
is an embedding of the $ax+b$ group into $SL(2,\mathbb{K})$.
Thus the general linear group $GL(n,\mathbb{K})\ (n\geq 2)$ is also not of finite type.
\end{example}

Next we consider abelian groups.
Note that an abelian topological group is of finite type if and only if it is unitarily representable.

\begin{example}\label{yasu hilbert space}
Any separable Hilbert space is a Polish group of finite type. 
This follows from Example \ref{yasu hilbert space positive definite function} and Theorem \ref{yasu characterization of finite type}.
\end{example}

\begin{example}\label{yasu lp spaces}
A separable Banach space $l^p\ (1\leq p\leq \infty)$ is a Polish group of finite type if and only if $1\leq p\leq 2$.
The ``only if'' part follows from Example \ref{yasu l^p positive definite}
and Theorem \ref{yasu characterization of finite type}, 
but the ``if'' part is non-trivial. 
For details, see \cite{Meg}.
\end{example}

Here is another counter example.

\begin{example}\label{yasu c01 space}
Separable Banach space $C[0,1]$ of all continuous functions on the interval $[0,1]$ is a Polish group but not of finite type.
For, since every separable Banach space is isometrically isomorphic to a closed subspace of $C[0,1]$, 
if $C[0,1]$ is of finite type, 
then any separable Banach space is a Polish group of finite type.
But this is a contradiction to the previous example.
\end{example}

\subsection{Application to Locally Compact Groups}
It is known that a second countable locally compact group is of finite type if and only if it is a SIN-group (see e.g., Theorem 13.10.5 of J. Dixmier \cite{Dixmier}). We give a new proof of this fact using Theorem \ref{yasu characterization of finite type}. We thank the referee for letting us know the above literature.

\begin{proposition}\label{yasu locally compact}
A second countable locally compact Hausdorff group is of finite type
if and only if it is SIN. 
\end{proposition}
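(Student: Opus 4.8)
The plan is to prove both directions using Theorem \ref{yasu characterization of finite type}, which reduces finite type to the existence of continuous positive definite class functions generating a neighborhood basis of the identity. First I would dispose of the easy direction: if $G$ is of finite type, then by Lemma \ref{yasu finite type has a bi-invariant metric} it admits a compatible bi-invariant metric and is therefore SIN. This requires no local compactness and so is immediate.

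For the converse, suppose $G$ is a second countable locally compact Hausdorff SIN-group. The strategy is to produce, for each invariant neighborhood $V$ of $e_G$, a continuous positive definite \emph{class} function that ``sees'' $V$, so that condition (iii) of Theorem \ref{yasu characterization of finite type} applies. The natural starting point is the left regular representation $\lambda$ of $G$ on $L^2(G)$ with respect to a Haar measure; since $G$ is second countable, $L^2(G)$ is separable. For a compactly supported continuous $\varphi$, the matrix coefficient $g\mapsto \nai{\xi}{\lambda(g)\xi}$ is continuous and positive definite, and by choosing $\varphi$ supported near the identity one can arrange such coefficients to generate a neighborhood basis. First I would recall that $G$ being SIN means it has a basis of conjugation-invariant neighborhoods of $e_G$; combined with unimodularity (noted in the SIN-groups subsection), Haar measure is bi-invariant.

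The crux is upgrading ordinary positive definite functions to \emph{class} functions, i.e.\ ones invariant under inner automorphisms, while retaining the neighborhood-basis property. Here I would exploit the SIN condition directly: starting from a positive definite coefficient $f$ concentrated on a small invariant neighborhood $V$, I would average $f$ over conjugation, or equivalently choose the defining vector $\xi=\varphi$ to be a (normalized) indicator-type function of an invariant neighborhood $W\subseteq V$, so that $f(hgh^{-1})=f(g)$ follows from the conjugation-invariance of $W$ together with bi-invariance of Haar measure. The resulting $f$ is a continuous positive definite class function with $f(e_G)$ large and $f$ small off $V$, giving a member of the family $\mathcal{F}$ in condition (iii). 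Letting $V$ range over a countable invariant neighborhood basis then yields a family generating a neighborhood basis of $e_G$, whence $G$ is of finite type by Theorem \ref{yasu characterization of finite type}.

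I expect the main obstacle to be precisely this construction of class functions that simultaneously are positive definite, are continuous, and genuinely separate a given invariant neighborhood from its complement; the conjugation-averaging must be controlled so that invariance is achieved without destroying the localization near $e_G$. The SIN hypothesis is exactly what makes this possible, since it lets one choose the averaging region to be already invariant, and local compactness supplies the Haar measure and the separable $L^2$-space on which the whole argument rests.
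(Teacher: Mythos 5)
Your proposal is correct and follows essentially the same route as the paper: the easy direction via Lemma \ref{yasu finite type has a bi-invariant metric}, and for the converse the matrix coefficients $g\mapsto \nai{\chi_U}{\lambda(g)\chi_U} = \mu(U\cap gU)$ of the left regular representation for compact invariant neighborhoods $U$, with the class-function property coming exactly as you say from invariance of $U$ together with unimodularity, followed by condition (iii) of Theorem \ref{yasu characterization of finite type}. The ``indicator-type function of an invariant neighborhood'' choice you settle on is precisely the paper's construction, so no conjugation-averaging is needed.
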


\begin{proof}
Let $G$ be a second countable locally compact Hausdorff SIN-group, $\mu$ be the Haar measure on it
and $\lambda$ be its left-regular representation.
For each compact invariant neighborhood $U$ of the identity, 
we define a continuous positive definite function $\varphi_U$ on $G$ by
\begin{equation*}
\varphi_U(g) := \nai{\chi_U}{\lambda(g)\chi_U } 
= \mu\left(U\cap gU\right), \ \ \ \ g\in G.
\end{equation*}
Note that, for each $g,h,x\in G$, we have
\begin{equation*}
h^{-1}x \in U \Leftrightarrow x\in hU=Uh \Leftrightarrow xh^{-1}\in U,
\end{equation*}
and
\begin{equation*}
(gh)^{-1}x\in U \Leftrightarrow x\in ghU=gUh \Leftrightarrow xh^{-1}\in gU.
\end{equation*}
Also note that 
a locally compact SIN-group is unimodular.
Thus we see that
\begin{align*}
\varphi_U(h^{-1}gh) &= \nai{\lambda(h)\chi_U}{\lambda(gh)\chi_U } \\
&= \int_{G}\chi_U(h^{-1}x)\chi_U((gh)^{-1}x)d\mu(x) \\
&= \int_{G}\chi_U(xh^{-1})\chi_{gU}(xh^{-1})d\mu(x) \\
&= \int_{G}\chi_U(x)\chi_{gU}(x)d\mu(x) \\
&= \int_{G}\chi_U(x)\chi_{U}(g^{-1}x)d\mu(x) \\
&=\varphi_U(g).
\end{align*}
This implies $\varphi_U$ is a class function.
It is not hard to check that a family $\{\varphi_U\}_U$ 
generates a neighborhood basis of the identity of $G$.
This completes the proof by Theorem \ref{yasu characterization of finite type}.
\end{proof}


\begin{remark}\label{yasu locally compact remark}
(1) R. V. Kadison and I. Singer \cite{KadSin} proved that every connected locally compact Hausdorff SIN group is isomorphic as a topological group onto a topological group of the form $\mathbb{R}^n\times K$, where $K$ is a compact Hausdorff group.\\
(2) K. Hofmann, S. Morris and M. Stroppel \cite{hofmann} proved that every totally disconnected locally compact Hausdorff group is SIN if and only if it is a strict projective limit of discrete groups.
\end{remark}

\subsection{A Characterization for Amenable Groups}
Next, we characterize (not necessarily locally compact) amenable Polish groups of finite type. Recall that a Hausdorff topological group $G$ is amenable if ${\rm LUCB}(G)$ admits a left-translation invariant
positive functional $m\in {\rm LUCB}(G)^*$ with $m(1)=1$,
where ${\rm LUCB}(G)$ is a complex Banach space of all left-uniformly continuous bounded functions on $G$.
Such a $m$ is called an {\it invariant mean}. 

\begin{theorem}\label{yasu amenable group}
A unitarily representable amenable Polish group is of finite type
if and only if it is SIN.
\end{theorem}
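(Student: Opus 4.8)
The plan is to prove both implications using the characterizations already established. The "only if" direction is immediate: by Lemma \ref{yasu finite type has a bi-invariant metric}, every Polish group of finite type is SIN, so no amenability or representability hypothesis is even needed here. All the work is in the "if" direction. So suppose $G$ is a unitarily representable, amenable, SIN Polish group; I want to produce enough continuous positive definite \emph{class} functions to generate a neighborhood basis of the identity, after which Theorem \ref{yasu characterization of finite type}(iii)$\Rightarrow$(i) finishes the job.

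First I would extract a starting supply of positive definite functions from unitary representability. By Lemma \ref{yasu characterization of unitarily representation}, since $G$ is unitarily representable there is a positive, continuous positive definite function $\varphi$ separating $e_G$ from closed sets not containing it; equivalently, the continuous positive definite functions on $G$ already separate points from closed sets and hence generate the topology. The trouble is that these functions need not be \emph{class} functions, so they are useless for building a \emph{finite} von Neumann algebra directly. The key step, and the main obstacle, is to average a given continuous positive definite function over the conjugation action to make it invariant under inner automorphisms, while (a) preserving continuity and positive definiteness and (b) retaining enough information that the averaged family still generates the topology.

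To carry out the averaging I would use amenability. Fix an invariant mean $m$ on ${\rm LUCB}(G)$. Given a continuous positive definite function $\varphi$ with $\varphi(e_G)=1$, I would like to define $\tilde{\varphi}(g) := m_h\big(\varphi(h^{-1}gh)\big)$, integrating out the conjugating variable $h$ against the mean. The routine checks are that $\tilde{\varphi}$ is a class function (by left-invariance of $m$), that it is positive definite (the positive definiteness inequality is preserved under the positive linear functional $m$, using that for fixed group elements $g_1,\dots,g_n$ the map $h\mapsto \sum \bar{c_i}c_j\varphi((h^{-1}g_ih)^{-1}(h^{-1}g_jh))$ is a bounded left-uniformly continuous nonnegative function of $h$), and that $\tilde{\varphi}(e_G)=1$. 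Two genuine difficulties must be handled carefully here. The first is membership in ${\rm LUCB}(G)$: for the mean to apply, the function $h\mapsto \varphi(h^{-1}gh)$ must be bounded and left-uniformly continuous, which needs $\varphi$ bounded (automatic, as $|\varphi|\le\varphi(e_G)$) together with a uniform-continuity estimate that I expect to obtain from the SIN structure and the standard inequality $|\varphi(a)-\varphi(b)|^2 \le 2\varphi(e_G)\,{\rm Re}(\varphi(e_G)-\varphi(b^{-1}a))$. The second, and the crux of the argument, is that averaging not destroy the topology-generating property: I must show that for a suitable choice of the original $\varphi$ (supported, in the separating sense, near $e_G$ on an \emph{invariant} neighborhood), the averaged $\tilde\varphi$ still detects that neighborhood. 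This is exactly where the SIN hypothesis is indispensable, since invariant neighborhoods are conjugation-stable and so the averaging cannot smear the relevant small neighborhood across the whole group.

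Concretely I would fix a countable base of invariant identity neighborhoods $\{V_k\}$ (available since $G$ is SIN and first countable), choose for each $k$ a continuous positive definite $\varphi_k$ separating $e_G$ from $G\setminus V_k$ via unitary representability, average each to a class function $\tilde{\varphi}_k$, and verify that because $V_k$ is invariant the separation survives: $\sup_{x\notin V_k}|\tilde{\varphi}_k(x)| < \tilde{\varphi}_k(e_G)$. Then $\{\tilde{\varphi}_k\}_k$ is a family of continuous positive definite class functions generating a neighborhood basis at $e_G$, so condition (iii) of Theorem \ref{yasu characterization of finite type} holds and $G$ is of finite type. The step I expect to be hardest is the verification that the averaged functions retain the separation property; the invariance of the $V_k$ is what makes it go through, and getting the uniform-continuity bound needed to place the conjugation orbit maps inside ${\rm LUCB}(G)$ is the other place where care is required.
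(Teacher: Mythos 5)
Your proposal is correct and follows essentially the same route as the paper: both directions are handled identically, and your key construction (averaging the positive definite function supplied by unitary representability over the conjugation action via an invariant mean, using the SIN property both to place the orbit maps $h\mapsto f(h^{-1}xh)$ in ${\rm LUCB}(G)$ via a Krein-type estimate and to ensure the averaged class function still separates invariant neighborhoods) is exactly the paper's argument. The only cosmetic difference is that you build a countable family of averaged functions indexed by a base of invariant neighborhoods and invoke condition (iii) of Theorem \ref{yasu characterization of finite type}, whereas the paper averages a single separating function and concludes via condition (v)/(vi).
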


\begin{proof}
Let $G$ be a unitarily representable amenable Polish SIN-group
and let $f$ be a positive, continuous positive definite function on $G$ which separates the identity of $G$ 
and closed subsets $A$ with $A\not\ni e_G$ 
(see Lemma \ref{yasu characterization of unitarily representation} ).
We may and do assume $f(e_G)=1$.
For each $x\in G$, we define a positive function $\Psi_{x,f}:G\to [0,1]$ by
\begin{equation*}
\Psi_{x,f}(g) := f(g^{-1}xg), \ \ \ \ g\in G.
\end{equation*}
We show that $\Psi_{x,f}\in {\rm LUCB}(G)$. Fix an arbitrary $\varepsilon>0$. Since the positive definite function $f$ is left-uniformly continuous, there exists a neighborhood $V$ of $e_G$ such that 
\[|f(g)-f(h)|<\varepsilon\]
holds whenver $g,h\in G$ satisfy $g^{-1}h\in V$. There exists a neighborhood $W$ of $e_G$ such that $W=W^{-1}$ and $W\cdot W\subset V$ holds. Since $G$ is SIN, there exists an invariant neighborhood $U$ of $e_G$ with $U\subset W$. Let $g,h\in G$ satisfy $g^{-1}h\in U$. By the invariance of $U$, it holds that $h\in gU=Ug$ and therefore that $hg^{-1}\in U$. Then we see that
\eqa{
(h^{-1}xh)^{-1}(g^{-1}xg)^{-1}&=h^{-1}x^{-1}hg^{-1}xg\in h^{-1}x^{-1}Uxg\\
&= h^{-1}Ug=Uh^{-1}g\\
&=U(g^{-1}h)^{-1}\subset W\cdot W^{-1}\\
&\subset V,
}
which implies 
\[|\Psi_{x,f}(h)-\Psi_{x,f}(g)|=|f(h^{-1}xh)-f(g^{-1}xg)|<\varepsilon.\]
Hence $\Psi_{x,f}$ is left-uniformly continuous and we have $\Psi_{x,f}\in {\rm LUCB}(G)_+$.  
Let $m\in {\rm LUCB}(G)^*$ be an invariant mean.
Put 
\begin{equation*}
\psi_f(x) := m(\Psi_{x,f}), \ \ \ \ x\in G,
\end{equation*}
then $\psi_f(x)$ is clearly a positive, positive definite class function on $G$ with $\psi_f(e_G)=1$. We show that $\psi_f$ is continuous. Since $m$ is continuous, it suffices to show that $G\ni x\mapsto \Psi_{x,f}\in {\rm LUCB}(G)_+$ is continuous. Let $x,y\in G$. By Krein's inequality, we have
\eqa{
||\Psi_{x,f}-\Psi_{y,f}||^2&=\sup_{g\in G}|f(g^{-1}xg)-f(g^{-1}yg)|^2\\
&\le 2\sup_{g\in G}|1-\text{Re}f(g^{-1}x^{-1}yg)|\\
&=2\sup_{g\in G}|1-f(g^{-1}x^{-1}yg)|.
}
Fix $\varepsilon>0$. Since $f$ is left-uniformly continuous, there exists an invariant neighborhood $V$ of $e_G$ such that 
$|f(x)-f(y)|<\varepsilon$ holds for $x,y\in G$ with $x^{-1}y\in G$. Then for $x,y\in G$ with $x^{-1}y\in V$, we have
$g^{-1}x^{-1}yg\in g^{-1}Vg=V$. Therefore it holds that
\[|1-f(g^{-1}x^{-1}yg)|=|f(e_G)-f(g^{-1}x^{-1}yg)|<\varepsilon.\]
Hence we have 
\[||\Psi_{x,f}-\Psi_{y,f}||^2\le 2\varepsilon.\]
Therefore $G\ni x\mapsto \Psi_{x,f}\in \text{LUCB}(G)_+$ is continuous, hence so is $\psi_f$. 
We next show that $\psi_f$ separates the identity of $G$ 
and closed subsets $A$ with $A\not\ni e_G$. Fix such a closed set $A$. Since $A^c=G\setminus A$ is an open neighborhood of $e_G$, there exists an open invariant neighborhood $V$ of $e_G$ contained in $A^c$. Then we have $A\subset V^c$ and $e_G\notin V$. Since $f$ separates $e_G$ and $V^c$, we have 
\[\delta:=\sup_{g\in V^c}|f(g)|<1.\]
It then follows, by the invariance of $V^c$, that for $x\in V^c$, 
\[
||\Psi_{x,f}||=\sup_{g\in G}|f(g^{-1}xg)|\le \sup_{g\in V^c}|f(g)|\le \delta,
\]
which implies
\[\sup_{x\in A}|\psi_f(x)|\le \sup_{x\in V^c}|\psi_f(x)|=\sup_{x\in V^c}|m(\Psi_{x,f})|\le \sup_{x\in V^c}||\Psi_{x,f}||\le \delta<1.\]
Therefore $\psi_{f}$ separates $A$ and $e_G$.    
This completes the proof by Theorem \ref{yasu characterization of finite type}.
\end{proof}

\begin{remark}The above proof is inspired by the proof of Theorem 2.13 of J. Galindo \cite{Galindo}.
\end{remark}

\section{More Examples of Finite Type Groups}
In this section we will give nother examples of Polish groups of finite type.  
To construct such examples we need to start not from finite von Neumann algebras, but from semifinite von Neumann algebras, say of type I$_{\infty}$ or of type II$_{\infty}$. In the end of this section we also review other known examples of Polish groups of finite type.
\subsection{$L^2$-unitary groups $\mathcal{U}(M)_2$}
Let $M$ be a semifinite von Neumann algebra on a Hilbert space $\mathcal{H}$ equipped with a normal faithful semifinite trace $\tau$. A densely defined, closed operator $T$ on $\mathcal{H}$ is said to be {\it affiliated} to $M$ if for all $u\in \mathcal{U}(M')$, $uTu^*=T$ holds. Denote by $\overline{M}$ the set of all densely defined, closed operators on $\mathcal{H}$ which are affiliated to $M$. 
Recall that $L^2(M,\tau)$ is a Hilbert space completion of the space $\mathfrak{n}_{\tau}:=\{x\in M; \tau(x^*x)<\infty\}$ by the inner product 
\[\nai{x}{y}:=\tau(x^*y),\ \ \ \ x,y\in \mathfrak{n}_{\tau}.\]
We define $||x||_2:=\tau(x^*x)^{\frac{1}{2}}$ for $x\in L^2(M,\tau)$. 
\begin{definition}
We call $\mathcal{U}(M)_2:=\{u\in \mathcal{U}(M); 1-u\in L^2(M,\tau)\}$ the {\it $L^2$-unitary group} of $(M,\tau)$.
\end{definition}
Note that when $M$ is not a factor, $\mathcal{U}(M)_2$ depends on the choice of $\tau$ too. In the sequel we show the following theorem.
\begin{theorem}\label{3U(M)_2} Let $M$ be a separable semifinite von Neumann algebra with a normal faithful semifinite trace $\tau$. Then 
$\mathcal{U}(M)_2$ is a Polish group of finite type,  where the topology is determined by the following metric $d$,
\[d(u,v):=||u-v||_2,\ \ \ \ u,v\in \mathcal{U}(M)_2.\]
\end{theorem}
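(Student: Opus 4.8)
The plan is to verify in turn that $(\mathcal{U}(M)_2,d)$ is (a) a group, (b) a topological group in which $d$ is a compatible bi-invariant metric, (c) separable, (d) complete, so that it is Polish, and finally (e) of finite type, the last step being deduced from Theorem \ref{yasu characterization of finite type}(iv) rather than by constructing a finite von Neumann algebra by hand. For the algebraic and metric structure I would use the identities $1-uv=(1-u)+u(1-v)$ and $1-u^{-1}=-u^*(1-u)$, which, together with the fact that multiplying an $L^2$ element by a unitary keeps it in $L^2$, show that $\mathcal{U}(M)_2$ is a subgroup of $\mathcal{U}(M)$. Writing $u-v=(1-v)-(1-u)\in L^2$ makes $d(u,v)=\|u-v\|_2$ well defined, and since $\tau$ is a trace, left multiplication, right multiplication and the adjoint all preserve $\|\cdot\|_2$; this gives bi-invariance, the estimate $d(uv,u'v')\le d(u,u')+d(v,v')$, and $d(u^{-1},v^{-1})=d(u,v)$, so $\mathcal{U}(M)_2$ is a topological group. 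Separability is then immediate, because $M$ separable makes $L^2(M,\tau)$ separable and $\mathcal{U}(M)_2\subset 1+L^2$ is a subset of a separable metric space.

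The heart of the argument, and the step I expect to be the main obstacle, is \emph{completeness}: $L^2$-convergence does not by itself yield a bounded limit, so one must show that the $L^2$-limit of $d$-Cauchy $L^2$-unitaries is again an $L^2$-unitary. Given a $d$-Cauchy sequence $(u_n)$, the vectors $1-u_n$ converge in $L^2(M,\tau)$ to some $\xi$, and I would locate the limiting unitary inside the standard representation $\lambda\colon M\to\mathbb{B}(L^2(M,\tau))$ by left multiplication. For $\eta\in\mathfrak{n}_\tau$ one has $\lambda(u_n)\hat\eta=\hat\eta-\widehat{(1-u_n)\eta}$; since right multiplication by the bounded element $\eta$ is $L^2$-continuous and $1-u_n\to\xi$, the right-hand side converges. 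As the $\lambda(u_n)$ are unitaries of norm one and $\mathfrak{n}_\tau$ is dense, $\lambda(u_n)$ converges strongly to some $V$, which lies in the von Neumann algebra $\lambda(M)$ because $\lambda(M)$ is strongly closed and the sequence is uniformly bounded.

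Applying the same reasoning to $\lambda(u_n)^*=\lambda(u_n^*)$ produces a strong limit $W\in\lambda(M)$, and passing to strong limits in $\lambda(u_n)\lambda(u_n)^*=\lambda(u_n)^*\lambda(u_n)=I$ gives $VW=WV=I$. Hence $V$ is unitary, $V=\lambda(u)$ for a unitary $u\in M$, and comparing $\lambda(u)\hat\eta$ with the computed limit identifies $1-u$ with $\xi$ as an element of $L^2$; thus $1-u\in\mathfrak{n}_\tau$, so $u\in\mathcal{U}(M)_2$, and $d(u_n,u)=\|(1-u_n)-\xi\|_2\to 0$. This shows $\mathcal{U}(M)_2$ is Polish.

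Finally, to obtain finite type I would exhibit one positive continuous positive definite class function generating the neighborhood basis of the identity and invoke Theorem \ref{yasu characterization of finite type}(iv). The map $b(u):=1-u$ satisfies the cocycle identity $b(uv)=b(u)+\lambda(u)b(v)$ for the unitary representation $\lambda$ on $L^2(M,\tau)$, so $\psi(u):=\|1-u\|_2^2$ is conditionally negative definite with $\psi(e)=0$, and it is a class function because $\|\cdot\|_2$ is invariant under unitary conjugation. By Schoenberg's theorem (cf.\ Example \ref{yasu hilbert space positive definite function}) the function $f(u):=e^{-\|1-u\|_2^2}$ is then a positive, continuous, positive definite class function, and since $\|1-u\|_2=d(u,1)$ its level sets about $e$ are exactly the $d$-balls, so $f$ generates a neighborhood basis of the identity. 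Theorem \ref{yasu characterization of finite type} then gives that $\mathcal{U}(M)_2$ is of finite type. The only delicate point is the completeness step, where one passes from $L^2$-convergence to a genuine unitary limit; the remaining verifications are routine consequences of the trace property and the basic theory of $L^2(M,\tau)$.
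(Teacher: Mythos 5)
Your proposal is correct, and it follows the paper's overall skeleton: the trace identities $\|x^*\|_2=\|x\|_2$ and $\|uxv\|_2=\|x\|_2$ for the group and metric structure, separability inherited from $L^2(M,\tau)$, completeness as the crux, and finally Schoenberg's theorem together with Theorem \ref{yasu characterization of finite type} applied to $\varphi(u)=e^{-\|1-u\|_2^2}$, which is exactly the paper's concluding step. The genuine divergence is in the completeness argument. The paper stays inside noncommutative integration theory: it identifies the $L^2$-limit $V$ of $1-u_n$ with a closed $\tau$-measurable operator, sets $U:=1-V$, proves via a separate density lemma (Lemma \ref{3dense}) that $\mathrm{dom}(U)\cap M$ is dense in $L^2(M,\tau)$, shows $U$ is isometric there by the trace estimate $\|(U-u_n)\xi\|_2\le\|\xi\|\,\|U-u_n\|_2$, and deduces boundedness, then unitarity by repeating the argument for $U^*$. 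You instead work in the standard representation: from $1-u_n\to\xi$ in $L^2$ and boundedness of right multiplication by elements of $\mathfrak{n}_\tau$ you obtain convergence of $\lambda(u_n)$ on the dense set $\widehat{\mathfrak{n}_\tau}$, hence by uniform boundedness strong convergence to some $V\in\lambda(M)$, and unitarity follows from the adjoint sequence and joint strong continuity of multiplication on bounded sets. This buys you independence from the machinery of $\tau$-measurable operators (you never need Lemma \ref{3dense} or the operator identification of $L^2$-elements), at the cost of the one step you gloss: identifying $1-u$ with $\xi$, i.e., verifying $\tau((1-u)^*(1-u))<\infty$. From the relation $\widehat{(1-u)\eta}=\xi\eta$ for all $\eta\in\mathfrak{n}_\tau$ this is routine — take projections $p_n\nearrow 1$ with $\tau(p_n)<\infty$ (semifiniteness), note $\xi p_n\to\xi$ in $L^2$, and use traciality and normality to get $\tau((1-u)p_n(1-u)^*)=\|\xi p_n\|_2^2\le\|\xi\|_2^2$, whence $1-u\in\mathfrak{n}_\tau$ and $d(u_n,u)\to 0$ — so it is a fillable gloss, not a gap. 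Your cocycle justification of positive definiteness via $1-uv=(1-u)+u(1-v)$ is also a cleaner articulation of what the paper uses implicitly, namely that bi-invariance reduces Schoenberg's positive definite function on the additive group $L^2(M,\tau)$ to a positive definite class function on $\mathcal{U}(M)_2$.
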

To prove the theorem, we need some preparations. 
In the sequel we consider $M$ to be represented on $\mathcal{H}=L^2(M,\tau)$ by left multiplication. Recall that a closed operator $T\in \overline{M}$ on $L^2(M,\tau)$ is called $\tau$-{\it measurable} if for any $\varepsilon>0$, there exists a projection $p\in M$ with $\text{ran}(p)\subset \dom{T}$ and $\tau(1-p)<\varepsilon$. 
Note that $L^2(M,\tau)$ can be identified with the set of closed, densely defined and $\tau$-measurable operators $T$ such that 
\[||T||_2^2:=\tau(|T|^2)=\int_0^{\infty}\lambda^2d\tau(e(\lambda))<\infty,\]
 where $e(\cdot )$ is a spectral resolution of $|T|=(T^*T)^{\frac{1}{2}}$ and $T=u|T|$ is the polar decomposition of $T$ (for more details about non-commutative integration, see vol II of \cite{Tak}). 

\begin{lemma}\label{hiro L2 topological group}
Let $M$ be a semifinite von Neumann algebra with a normal faithful semifinite trace $\tau$. Then $\mathcal{U}(M)_2$ is a topological group.
\end{lemma}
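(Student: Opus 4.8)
The plan is to verify the two group operations---multiplication and inversion---are continuous with respect to the metric $d(u,v)=\norm{u-v}_2$, since $\mathcal{U}(M)_2$ is already a group (the product and inverse of elements with $1-u\in L^2(M,\tau)$ stay in $\mathcal{U}(M)_2$, because $1-uv=(1-u)v+(1-v)$ and $1-u^*=-u^*(1-u)$, and $L^2(M,\tau)$ is a two-sided $M$-module). The underlying metric space is separable and complete because $L^2(M,\tau)$ is a separable Hilbert space when $M$ is separable, so once the algebraic operations are shown continuous we obtain a Polish group.

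For continuity of inversion, I would compute $\norm{u^{-1}-v^{-1}}_2 = \norm{u^*-v^*}_2 = \norm{(u-v)^*}_2 = \norm{u-v}_2$, using that the trace is invariant under adjoints on $L^2$ and that $u,v$ are unitaries; thus inversion is in fact an isometry and its continuity is immediate. For continuity of multiplication, the key identity is
\begin{equation*}
uv-u_0v_0 = u(v-v_0)+(u-u_0)v_0,
\end{equation*}
so that, by the triangle inequality and the fact that left and right multiplication by a unitary preserves $\norm{\cdot}_2$,
\begin{equation*}
\norm{uv-u_0v_0}_2 \le \norm{u(v-v_0)}_2 + \norm{(u-u_0)v_0}_2 = \norm{v-v_0}_2 + \norm{u-u_0}_2.
\end{equation*}
This shows multiplication $\mathcal{U}(M)_2\times \mathcal{U}(M)_2 \to \mathcal{U}(M)_2$ is (jointly) continuous, indeed Lipschitz in each variable with constant $1$.

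The one point requiring genuine care---and the step I expect to be the main obstacle---is justifying that $\norm{au}_2=\norm{ua}_2=\norm{a}_2$ for $a\in L^2(M,\tau)$ and $u\in\mathcal{U}(M)$, i.e. the two-sided invariance of the $L^2$-norm under unitaries. This is the noncommutative analogue of translation invariance and follows from the tracial property: $\norm{ua}_2^2=\tau(a^*u^*ua)=\tau(a^*a)=\norm{a}_2^2$, and $\norm{au}_2^2=\tau(u^*a^*au)=\tau(a^*auu^*)=\tau(a^*a)=\norm{a}_2^2$ by traciality, with these identities extended from $\mathfrak{n}_\tau$ to all of $L^2(M,\tau)$ by density and continuity. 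Care is needed because $a$ is an unbounded $\tau$-measurable operator, so I would first establish the invariance on the dense subspace $\mathfrak{n}_\tau$ of bounded elements and then pass to the completion, using that left and right multiplication by a fixed unitary extend to isometries of $L^2(M,\tau)$. Once this invariance is in hand, both displayed estimates above are immediate and the proof is complete.
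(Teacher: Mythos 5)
Your proof is correct and follows essentially the same route as the paper, whose proof simply cites the two equalities $\norm{x^*}_2=\norm{x}_2$ and $\norm{uxv}_2=\norm{x}_2$ for $x\in L^2(M,\tau)$, $u,v\in\mathcal{U}(M)$, and says the rest is direct. Your write-up fills in exactly the details the paper leaves implicit (closure of the group operations, the splitting $uv-u_0v_0=u(v-v_0)+(u-u_0)v_0$, and the density argument extending the unitary invariance from $\mathfrak{n}_\tau$ to $L^2(M,\tau)$), so there is nothing to correct.
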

\begin{proof}
This can be shown directly, using the equalities: 
\[||x^*||_2=||x||_2,\ \ \ \ ||uxv||_2=||x||_2,\]
for all $x\in L^2(M,\tau)$ and $u,v\in \mathcal{U}(M)$.
\end{proof}
\begin{lemma}\label{3dense} Let $M$ be a semifinite von Neumann algebra with a normal faithful semifinite trace $\tau$. Let $U$ be a densely defined closed $\tau$-measurable operator on $L^2(M,\tau)$ affiliated to $M$. Then $\dom{U}\cap M$ is dense in $L^2(M,\tau)$.
\end{lemma}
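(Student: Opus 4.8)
The plan is to approximate an arbitrary $\xi\in L^2(M,\tau)$ by a vector that is simultaneously bounded (i.e.\ lies in $M$) and in $\dom{U}$, obtained by truncating a bounded approximant with a spectral projection of $|U|$. The two ingredients are: first, the subspace $\mathfrak{n}_\tau = M\cap L^2(M,\tau)$, which is dense in $L^2(M,\tau)$ by the very construction of $L^2(M,\tau)$ as the completion of $\mathfrak{n}_\tau$; and second, the spectral projections $e_\lambda := \chi_{[0,\lambda]}(|U|)$ of the positive part $|U|=(U^*U)^{1/2}$.

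First I would record the relevant properties of these projections. Since $U$ is affiliated to $M$, so is $|U|$, and hence each $e_\lambda$ lies in $M$. By the spectral theorem one has $\ran{e_\lambda}\subset \dom{|U|}=\dom{U}$, and $e_\lambda\to 1$ strongly (along the increasing net) on $L^2(M,\tau)$ as $\lambda\to\infty$, because $|U|$ is densely defined. (The $\tau$-measurability of $U$ guarantees in addition that $\tau(1-e_\lambda)\to 0$, but for the density argument only the strong convergence $e_\lambda\to 1$ and the membership $e_\lambda\in M$ are actually needed.) Then, given $\xi\in L^2(M,\tau)$ and $\varepsilon>0$, I would first choose $a\in\mathfrak{n}_\tau$ with $\|\xi-a\|_2<\varepsilon/2$ by density of $\mathfrak{n}_\tau$, and then choose $\lambda$ so large that $\|a-e_\lambda a\|_2<\varepsilon/2$, which is possible since $e_\lambda a\to a$ in $L^2(M,\tau)$ by strong convergence. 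Setting $x:=e_\lambda a$, I would check that $x\in M$ as a product of two elements of $M$; that $\tau(x^*x)=\tau(a^*e_\lambda a)\le\tau(a^*a)<\infty$, so $x\in\mathfrak{n}_\tau\subset L^2(M,\tau)$; and that, $e_\lambda$ being a projection, $x=e_\lambda a\in\ran{e_\lambda}\subset\dom{U}$. Hence $x\in\dom{U}\cap M$ and $\|\xi-x\|_2\le\|\xi-a\|_2+\|a-e_\lambda a\|_2<\varepsilon$, which gives the density.

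The only delicate point is verifying that the single truncation $x=e_\lambda a$ lands in all three sets at once: bounded, square-integrable, and inside $\dom{U}$. The crux is the choice of cutoff: using the spectral projections of $|U|$ (rather than an arbitrary projection furnished by $\tau$-measurability) makes the inclusion $\ran{e_\lambda}\subset\dom{U}$ transparent, keeps $e_\lambda$ inside $M$ so that $e_\lambda a$ stays bounded with $\tau\big((e_\lambda a)^*(e_\lambda a)\big)\le\tau(a^*a)$, and simultaneously supplies the strong convergence $e_\lambda\to 1$ needed for the approximation. In particular no explicit description of how $U$ acts on vectors of $L^2(M,\tau)$ is required; only the membership $x\in\ran{e_\lambda}\subset\dom{U}$ is used.
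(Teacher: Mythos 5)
Your proof is correct, and it follows the same two-step skeleton as the paper's proof --- approximate $\xi$ by a bounded element $a\in\mathfrak{n}_\tau$, then cut off by a projection whose range sits inside $\dom{U}$ and which is close to $1$ --- but the source of the cutoff projections is genuinely different. The paper invokes the $\tau$-measurability hypothesis directly: it produces an increasing sequence of projections $p_n\in M$ with $p_nL^2(M,\tau)\subset\dom{U}$ and $p_n\nearrow 1$ strongly, and then forms $p_{n_0}\xi_0$ exactly as you form $e_\lambda a$. You instead take the spectral projections $e_\lambda=\chi_{[0,\lambda]}(|U|)$, for which the inclusion $\ran{e_\lambda}\subset\dom{|U|}=\dom{U}$ (via the polar decomposition, which also keeps $|U|$ affiliated to $M$ so that $e_\lambda\in M$) and the strong convergence $e_\lambda\to 1$ are immediate from the spectral theorem. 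Your observation that $\tau$-measurability is then never used is accurate: your argument proves the slightly stronger statement that $\dom{U}\cap M$ is dense in $L^2(M,\tau)$ for \emph{every} densely defined closed operator affiliated to $M$, whereas the paper's proof consumes the measurability hypothesis (and in fact quietly relies on the standard but nontrivial upgrade from the defining condition $\tau(1-p)<\varepsilon$ to an increasing sequence $p_n\nearrow 1$ strongly with ranges in $\dom{U}$). So the trade-off is: the paper's route is shorter given Nelson's measurability machinery already in play in this section, while yours is self-contained at the level of the spectral theorem, makes the domain inclusion transparent, and removes a hypothesis. All the small verifications you flag --- $e_\lambda a\in M$, $\tau\bigl((e_\lambda a)^*(e_\lambda a)\bigr)\le\tau(a^*a)$, and $e_\lambda a=e_\lambda(a)\in\ran{e_\lambda}$ because $M$ acts on $L^2(M,\tau)$ by left multiplication --- are correctly handled.
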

\begin{proof} Let $\varepsilon>0$. Let $\xi \in L^2(M,\tau)$. Since $M\cap L^2(M,\tau)$ is dense, there exists $\xi_0 \in M\cap L^2(M,\tau)$ such that $||\xi-\xi_0||_2<\varepsilon$. On the other hand, the measurability of $U$ implies the existence of an increasing sequence $\{p_n\}_{n=1}^{\infty}$ of projections in $M$ such that $p_nL^2(M,\tau)\subset \dom{U}$ for all $n$ and $p_n\nearrow 1$ strongly. Therefore there exists $n_0\in \mathbb{N}$ such that 
\[||\xi_0-p_{n_0}\xi_0||_2<\varepsilon.\]
By the choice of $\xi_0$, $p_{n_0}\xi_0\in \dom{U}\cap M$ and 
\eqa{
||\xi-p_{n_0}\xi_0||_2&\le ||\xi-\xi_0||_2+||\xi_0-p_{n_0}\xi_0||_2\\
&\le \varepsilon +\varepsilon=2\varepsilon.
}
Since $\varepsilon$ is arbitrary, it follows that $\dom{U}\cap M$ is dense in $L^2(M,\tau)$.
\end{proof}
\begin{lemma}\label{hiro d is comp}
Let $M$ be a semifinite von Neumann algebra with a normal faithful semifinite trace $\tau$. $d$ is a complete metric on $\mathcal{U}(M)_2$.
\end{lemma}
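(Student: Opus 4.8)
The plan is to combine completeness of the Hilbert space $L^2(M,\tau)$ with the fact, due to Nelson \cite{Nelson}, that the $*$-algebra $\widetilde{M}$ of $\tau$-measurable operators affiliated to $M$ is a complete topological $*$-algebra in the measure topology, in which the involution is continuous and multiplication is jointly continuous. Let $\{u_n\}_n$ be a $d$-Cauchy sequence in $\mathcal{U}(M)_2$. Since $u_n-u_m=(1-u_m)-(1-u_n)\in L^2(M,\tau)$ and $\|u_n-u_m\|_2=d(u_n,u_m)\to 0$, the sequence $\{1-u_n\}_n$ is Cauchy in $L^2(M,\tau)$. By completeness of $L^2(M,\tau)$ there is $\eta\in L^2(M,\tau)$ with $\|(1-u_n)-\eta\|_2\to 0$; set $u:=1-\eta$, which is a $\tau$-measurable operator affiliated to $M$.

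The crucial step is to show that this $L^2$-limit $u$ is in fact a unitary element of $M$. Because the $\|\cdot\|_2$-topology is finer than the measure topology on $L^2(M,\tau)$ (via a Chebyshev-type estimate), the convergence $1-u_n\to\eta$ in $\|\cdot\|_2$ yields $u_n\to u$ in the measure topology. By continuity of the involution and joint continuity of multiplication in $\widetilde{M}$, we then get $u_n^*u_n\to u^*u$ and $u_nu_n^*\to uu^*$ in the measure topology. Since each $u_n$ is unitary we have $u_n^*u_n=u_nu_n^*=1$ for all $n$, and uniqueness of limits in the Hausdorff space $\widetilde{M}$ forces $u^*u=uu^*=1$.

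From $u^*u=1$ we obtain $|u|=(u^*u)^{1/2}=1$, so $\dom{u}=\mathcal{H}$ and $u$ is a bounded, everywhere-defined closed operator; being affiliated to $M$ it lies in $M$, and $u^*u=uu^*=1$ makes it unitary, i.e. $u\in\mathcal{U}(M)$. Moreover $1-u=\eta\in L^2(M,\tau)$ shows $u\in\mathcal{U}(M)_2$, and $d(u_n,u)=\|u_n-u\|_2=\|(1-u_n)-\eta\|_2\to 0$. Hence every $d$-Cauchy sequence converges in $\mathcal{U}(M)_2$, so $d$ is complete.

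I expect the main obstacle to be precisely the second paragraph: an $L^2$-limit of unitaries need not be unitary, and $\|\cdot\|_2$-convergence does not interact well with taking products. Passing to the measure topology, where multiplication and the involution are continuous, is exactly what lets the identities $u_n^*u_n=u_nu_n^*=1$ be transported to the limit. The remaining verifications (that $\|\cdot\|_2$-convergence implies convergence in measure, and that a bounded operator affiliated to $M$ belongs to $M$) are standard and citable from \cite{Nelson} and \cite{Tak}.
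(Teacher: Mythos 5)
Your proof is correct, but it takes a genuinely different route from the paper's. The paper argues entirely inside $L^2(M,\tau)$: after extracting the $\|\cdot\|_2$-limit $U=1-V$, it shows $U$ is isometric on the dense subspace $\dom{U}\cap M$ (density being Lemma \ref{3dense}, proved just before) via the trace computation
\[
\|(U-u_n)\xi\|_2^2=\tau\bigl((U-u_n)\,\xi\xi^*\,(U-u_n)^*\bigr)\le \|\xi\|^2\,\|U-u_n\|_2^2,
\]
which uses traciality and $\xi\xi^*\le\|\xi\|^2 1$ for $\xi\in\dom{U}\cap M$; this gives $\|U\xi\|_2=\lim_n\|u_n\xi\|_2=\|\xi\|_2$, so $U$ is bounded and isometric, and applying the same to $U^*$ (via $\|U^*-u_n^*\|_2=\|U-u_n\|_2$) yields unitarity. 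You instead outsource the "limit of unitaries is unitary" step to Nelson's theorem that $\widetilde{M}$ is a complete Hausdorff topological $*$-algebra in the measure topology: $\|\cdot\|_2$-convergence implies measure convergence by Chebyshev, joint continuity of multiplication transports $u_n^*u_n=u_nu_n^*=1$ to $u^*u=uu^*=1$, and $|u|=1$ forces $u\in\mathcal{U}(M)$. Both arguments are sound; yours is shorter, more conceptual, and bypasses Lemma \ref{3dense} entirely, but it leans on the nontrivial external fact that multiplication on $\widetilde{M}$ is \emph{jointly} continuous in measure (Nelson's Theorem 1, or Takesaki vol.~II), whereas the paper's proof is self-contained, using only traciality of $\tau$ and its own density lemma. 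One point worth making explicit in your write-up: in $\widetilde{M}$ the product $u^*u$ is the strong (closed) product, but since $u^*u$ is automatically self-adjoint for closed densely defined $u$, the identity $u^*u=1$ indeed gives $\dom{u}=\dom{|u|}=\mathcal{H}$, so your final paragraph goes through as stated.
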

\begin{proof} Suppose $\{u_n\}_{n=1}^{\infty}$ is a $d$-Cauchy sequence in $\mathcal{U}(M)_2$. Since $L^2(M,\tau)$ is complete, there exists $V\in L^2(M,\tau)$ such that $||(1-u_n)-V||_2\to 0$. Define $U:=1-V$. Then $||U-u_n||_2\to 0$. We show that $U$ is bounded and moreover $U\in \mathcal{U}(M)_2$. Since $U$ is closed and $\dom{U}\cap M$ is dense by Lemma \ref{3dense}, to prove the boundedness of $U$ it suffices to show that $U$ is isometric on $\dom{U}\cap M$. Let $\xi \in \dom{U}\cap M$. Since $\xi$ is bounded, we have
\eqa{
||(U-u_n)\xi||_2^2&=\tau(\xi^*(U-u_n)^*(U-u_n)\xi)\\
&=\tau((U-u_n)\xi \xi^*(U-u_n)^*)\\
&\le ||\xi||^2\tau ((U-u_n)(U-u_n)^*)\\
&=||\xi||^2||U-u_n||_2^2\to 0,
}
which implies 
\[||U\xi||_2=\lim_{n\to \infty}||u_n\xi||_2=||\xi||_2,\]
for all $\xi \in \dom{U}\cap M$. 
Therefore $U|_{\dom{U}\cap M}$ is isometric and $U$ is bounded. Since $||U^*-u_n^*||_2=||U-u_n||_2$, it holds that $U^*$ is an isometry too, which means $U$ is a unitary. Finally, it is clear that $U=1-V\in \mathcal{U}(M)_2$. 
\end{proof}
\begin{proof}[Proof of Theorem \ref{3U(M)_2}]
Since $M$ is separable, the separability of $\mathcal{U}(M)_2$ follows from the separability of $L^2(M,\tau)$. Therefore by Lemma \ref{hiro d is comp}, $\mathcal{U}(M)_2$ is a Polish group. By Schoenberg's theorem (see Example \ref{yasu hilbert space positive definite function}), 
\[\varphi(u):=e^{-||1-u||_2^2},\ \ \ \ u\in \mathcal{U}(M)_2,\]
is a continuous, positive definite class function on $\mathcal{U}(M)_2$. 
It is easy to see that $\varphi$ generates a neighborhood basis of the identity of $\mathcal{U}(M)_2$. Therefore the claim follows from Theorem \ref{yasu characterization of finite type}. 
\end{proof}
\begin{remark}
$\mathcal{U}(M)_2''=M.$
\end{remark}
\begin{proof}
Clearly $\mathcal{U}(M)_2''\subset M$. Let $p$ be a finite projection in $M$. Then $2p\in L^2(M,\tau)$ and $1-2p\in \mathcal{U}(M)_2$. Therefore $p\in \mathcal{U}(M)_2''$. Since $M$ is semifinite, $M$ is generated by finite projections. Therefore $\mathcal{U}(M)_2''=M$.
\end{proof}
When $M=\mathbb{B}(\mathcal{H})$,\ $\mathcal{U}(M)_2$ is the well-known example of a Hilbert-Lie group and is denoted as $\mathcal{U}(\mathcal{H})_2$. 
\subsection{Non-isomorphic Properties of $\mathcal{U}(M)_2$}
J. Feldman \cite{Feldman} gave a complete description of a group isomorphism between the unitary groups of type II$_1$ von Neumann algebras. In particular, in the proof of Theorem 4 of \cite{Feldman}, he uses the following simple observation: let $p$ be a projection in a von Neumann algebra $M$, then $u_p:=1-2p$ is a self-adjoint unitary in $M$. Using this correspondence, he deduced that the group isomorphism $\pi: \mathcal{U}(M_1)\to \mathcal{U}(M_2)$ between type II$_1$ von Neumann algebras $M_1,M_2$ induces an order isomorphism between their projection lattices, thereby proving that the isomorphism $\pi$ is lifted to a ring *-isomorphism $\overline{\pi}:M_1\to M_2$ (which may not preserve the scalar multiplication) in such a way that 
\[\overline{\pi}(u)=\theta(u)\pi(u),\ \ \text{for all }u\in \mathcal{U}(M_1)\]
holds, where $\theta$ is a multiplicative map from $\mathcal{U}(M_1)$ to $Z(\mathcal{U}(M_2))$.    
Let $\mathcal{H}$ be an infinite dimensional Hilbert space. Using his idea, we show that when $M$ is a II$_{\infty}$ factor and $N$ is a finite von Neumann algebra, then $\mathcal{U}(M)_2$, $\mathcal{U}(\mathcal{H})_2$ and $\mathcal{U}(N)$ are mutually  non-isomorphic. In this subsection, no separability assumptions are required.
\begin{proposition}\label{U(M)_2 is not U(H)_2}
Let $M$ be a II$_{\infty}$ factor. Then $\mathcal{U}(M)_2$ is not isomorphic onto $\mathcal{U}(\mathcal{H})_2$.
\end{proposition}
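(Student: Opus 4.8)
The plan is to adapt Feldman's device $p \mapsto 1-2p$ to the $L^2$-setting and then extract an order-theoretic invariant that separates type I$_\infty$ from type II$_\infty$. So I would assume, towards a contradiction, that there is a group isomorphism $\pi\colon \mathcal{U}(M)_2 \to \mathcal{U}(\mathcal{H})_2$ and analyse how it acts on order-two elements. An element $u$ of a unitary group has order two exactly when $u=u^*=u^{-1}\neq 1$, i.e. when $u=1-2p$ for a nonzero projection $p$; and membership in $\mathcal{U}(M)_2$ forces $2p=1-u\in L^2(M,\tau)$, that is $\tau(p)<\infty$. Since $-1\notin \mathcal{U}(M)_2$ (because $\tau(2\cdot 1)=\infty$) there is no sign ambiguity, so the involutions of $\mathcal{U}(M)_2$ are in canonical bijection with the nonzero finite-trace projections of $M$, and likewise the involutions of $\mathcal{U}(\mathcal{H})_2$ correspond to the nonzero finite-rank projections of $\mathbb{B}(\mathcal{H})$. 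As $\pi$ preserves element order, it restricts to a bijection $\Phi$ between these two families, determined by $\pi(1-2p)=1-2\Phi(p)$. (Here the center of each $L^2$-unitary group of a factor is trivial, so the central correction $\theta$ appearing in Feldman's setting is absent.)

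Next I would record that $\Phi$ preserves commutation: two involutions commute if and only if the corresponding projections commute, and a group isomorphism preserves commuting pairs. The heart of the matter is then to upgrade this to the order structure. Following the idea in the proof of Theorem 4 of \cite{Feldman}, one uses conjugation and products in the ambient group to show that $\Phi$ respects the lattice order on finite projections, and in particular carries minimal (atomic) projections to minimal projections. I expect this order-recovery step to be the main obstacle, for a structural reason: the abstract group generated by a commuting family of involutions is merely an $\mathbb{F}_2$-vector space and retains no trace of the order, so one cannot read off minimality from any fixed maximal abelian family alone. One must genuinely exploit the global group structure — for instance, subequivalence $p\precsim q$ should be detected by conjugating $1-2p$ into an involution sitting below $1-2q$ — exactly as Feldman does.

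Finally I would derive the contradiction from the type dichotomy. In $\mathbb{B}(\mathcal{H})$ there exist minimal projections, the rank-one projections being atoms of the lattice of finite-rank projections. In a II$_\infty$ factor, by contrast, the trace is continuous, so every nonzero finite-trace projection $p$ properly dominates a subprojection $0<q<p$ (say of half the trace); hence $M$ has no minimal projections at all. Choosing a rank-one projection $e$ in $\mathbb{B}(\mathcal{H})$, its preimage $\Phi^{-1}(e)$ would be a minimal finite-trace projection of $M$, which is impossible; equivalently, picking $0<q<\Phi^{-1}(e)$ in $M$ would force $0<\Phi(q)<e$, contradicting the minimality of $e$. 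This contradiction shows that no isomorphism $\pi$ can exist, proving the proposition.
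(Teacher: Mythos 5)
Your first step (involutions of $\mathcal{U}(M)_2$ are exactly the elements $1-2p$ with $p$ a nonzero finite-trace projection, since $-1\notin\mathcal{U}(M)_2$) is correct and is indeed the Feldman observation the paper also uses. But your proof has a genuine gap exactly where you flag it: the order-recovery step. You never show that the bijection $\Phi$ determined by $\pi(1-2p)=1-2\Phi(p)$ preserves the order on projections (or even just minimality), and deferring this to ``exactly as Feldman does'' does not work as stated. Feldman's Theorem 4 in \cite{Feldman} is carried out inside the full unitary group of a finite (type II$_1$) ring, where symmetries $1-2p$ exist for \emph{all} projections (including $1-p$), scalar unitaries are available, and the lemmas detecting orthogonality and comparison of projections use unitaries that simply do not live in $\mathcal{U}(M)_2$. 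In the $L^2$-group only finite-trace projections yield involutions, there is no center to exploit, and — as your own $\mathbb{F}_2$-vector-space remark correctly observes — commutation-preservation alone cannot recover the order. Your sketch that subequivalence ``should be detected by conjugating $1-2p$ into an involution sitting below $1-2q$'' is circular as written, because ``sitting below'' is itself the order relation you are trying to define group-theoretically. So the heart of the argument is asserted, not proved, and the proof is incomplete.

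It is worth contrasting this with the paper's proof, which avoids order-recovery entirely by using the topology: the proposition, read in the paper's category, concerns \emph{topological} group isomorphisms. The paper takes a nonzero finite-rank $p$, sets $q:=\frac{1}{2}\bigl(1-\varphi^{-1}(1-2p)\bigr)$ (a nonzero finite projection in $M$, by your same involution dictionary), and then uses that $M$ is II$_\infty$ to choose projections $0<q_k\le q$ with $\tau(q_k)\to 0$, so that $1-2q_k\to 1$ in $\mathcal{U}(M)_2$. Continuity of $\varphi$ forces $1-2p_k:=\varphi(1-2q_k)\to 1$ in $\mathcal{U}(\mathcal{H})_2$, yet every nonzero finite-rank projection satisfies $\|1-(1-2p_k)\|_{2,\mathrm{Tr}}=2\,\mathrm{Tr}(p_k)^{1/2}\ge 2$ — a uniform trace gap near the identity that replaces your atom/minimality invariant. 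The same diffuseness of the II$_\infty$ trace that you invoke at the end is used, but against the metric rather than against the lattice. If you want your stronger, purely algebraic conclusion (no abstract group isomorphism), you must actually carry out a Feldman- or Dye-style reconstruction of the projection order inside $\mathcal{U}(M)_2$; until that step is supplied, you should either prove it or retreat to the topological statement, where the paper's short argument suffices.
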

\begin{proof}Let $\tau$ be a normal faithful semifinite trace on $M$, Tr be the usual operator trace on $\mathcal{H}$. We denote their corresponding trace 2-norms by $||\cdot ||_{2,\tau}$ and $||\cdot ||_{2,\text{Tr}}$, respectively.
We prove the claim by contradiction. Suppose there exists a topological group isomorphism $\varphi : \mathcal{U}(M)_2\to \mathcal{U}(\mathcal{H})_2$. Let $p$ be a nonzero finite-rank projection in $\mathbb{B}(\mathcal{H})$. Then $1-2p\in \mathcal{U}(\mathcal{H})_2$ and let
\[q:=\frac{1}{2}(1-\varphi^{-1}(1-2p)).\]
It is easy to see that $q\in L^2(M,\tau)$ is a nonzero finite projection in $M$. 
Let $k\in \mathbb{N}$. Since $M$ is a II$_{\infty}$ factor, there exists a projection $0<q_k\le q$ in $M$ such that $\lim_{k\to \infty}\tau(q_k)=0$. Define $p_k:=\frac{1-\varphi(1-2q_k)}{2}$. Since 
\[||q_k||_{2,\tau}^2=\tau(q_k)\to 0\ (k\to \infty),\]
$1-2q_k\to 1$ holds in $\mathcal{U}(M)_2$, which in turn means 
\[1-2p_k=\varphi(1-2q_k)\to \varphi(1)=1 \ \text{ in }\mathcal{U}(\mathcal{H})_2.\]
However, since the topology of $\mathcal{U}(\mathcal{H})_2$ is given by the operator trace 2-norm, it holds that
\[2\le ||2p_k||_{2,\text{Tr}}=||1-(1-2p_k)||_{2,\text{Tr}}\to 0\ (k\to \infty).\]
This is clearly a contradiction. Therefore $\mathcal{U}(M)_2\not \cong \mathcal{U}(\mathcal{H})_2$.
\end{proof}
\begin{proposition}\label{hiro U(M)_2 not iso to U(N)}
Let $M$ be a type I$_{\infty}$ or type II$_{\infty}$ factor, $N$ be a finite von Neumann algebra. Then $\mathcal{U}(M)_2$ is not isomorphic onto $\mathcal{U}(N)$. 
\end{proposition}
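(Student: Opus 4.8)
The plan is to distinguish the two groups by a purely algebraic invariant, namely the group-theoretic center, so that not even an abstract (let alone topological) group isomorphism can exist. Concretely, I would show that $Z(\mathcal{U}(M)_2)$ is trivial while $Z(\mathcal{U}(N))$ is not, and then invoke the elementary fact that any group isomorphism $\varphi:\mathcal{U}(M)_2\to\mathcal{U}(N)$ must carry centers onto centers, i.e.\ $\varphi(Z(\mathcal{U}(M)_2))=Z(\mathcal{U}(N))$.

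First I would compute $Z(\mathcal{U}(M)_2)$. An element $u\in\mathcal{U}(M)_2$ lies in the center of the group precisely when it commutes with every $v\in\mathcal{U}(M)_2$, that is, when $u\in\mathcal{U}(M)_2'$. By the remark that $\mathcal{U}(M)_2''=M$, we have $\mathcal{U}(M)_2'=M'$, so $u\in M\cap M'=Z(M)=\mathbb{C}1$, the last equality because $M$ is a factor. Thus $u=\lambda 1$ with $|\lambda|=1$, and membership in $\mathcal{U}(M)_2$ forces $1-u=(1-\lambda)1\in L^2(M,\tau)$. Since $M$ is of type I$_{\infty}$ or II$_{\infty}$, the identity is an infinite projection and $\tau(1)=+\infty$, so $(1-\lambda)1\in L^2(M,\tau)$ holds only for $\lambda=1$. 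Hence $Z(\mathcal{U}(M)_2)=\{1\}$.

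Next I would observe that $Z(\mathcal{U}(N))$ is nontrivial. Since $N$ is unital and nonzero, the central self-adjoint unitary $-1\in\mathcal{U}(N)$ commutes with every element of $\mathcal{U}(N)$ and satisfies $-1\neq 1$; thus $\{\pm 1\}\subseteq Z(\mathcal{U}(N))$. (More precisely, the same commutant argument gives $Z(\mathcal{U}(N))=\mathcal{U}(Z(N))$, but the single element $-1$ already suffices.) Combining the two computations, $Z(\mathcal{U}(M)_2)$ is trivial while $Z(\mathcal{U}(N))$ is not, so the two groups cannot be isomorphic, which proves the proposition even at the level of abstract groups.

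I expect the only delicate point to be the identification $Z(\mathcal{U}(M)_2)=\mathbb{C}1\cap\mathcal{U}(M)_2$, which hinges on the already-established commutant relation $\mathcal{U}(M)_2''=M$ together with the factoriality of $M$; everything else is a short verification. This is exactly in the spirit of Feldman's use of the self-adjoint unitaries $1-2p$: the key structural feature of $\mathcal{U}(M)_2$ is that it contains $1-2p$ only for finite projections $p$, and in particular never contains $-1=1-2\cdot 1$ because $1\notin L^2(M,\tau)$, which is precisely what collapses its center.
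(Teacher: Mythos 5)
Your proposal is correct and is essentially the paper's own argument: the paper likewise distinguishes the groups by showing $Z(\mathcal{U}(M)_2)=\{1\}$ (any central $u$ commutes with the unitaries $1-2p$ for finite projections $p$, hence with all of $M$ since $M$ is generated by its finite projections, so $u\in Z(M)=\mathbb{C}1$, and $u-1\in L^2(M,\tau)$ forces $u=1$), while $Z(\mathcal{U}(N))$ contains the scalar unitaries. Your only cosmetic difference is routing the first step through the remark $\mathcal{U}(M)_2''=M$ rather than repeating its finite-projection argument inline, which is a legitimate repackaging of the same idea.
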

\begin{proof}
Let $\tau$ be a normal faithful semifinite trace on $M$. Let $u\in Z(\mathcal{U}(M)_2)$ be an element of $\mathcal{U}(M)_2$ which commutes with every element in $\mathcal{U}(M)_2$. Then for any finite projection $p\in M$, $u(1-2p)=(1-2p)u$ holds. Therefore $u$ commutes with all finite projections in $M$. Since $M$ is generated by its finite projections, $u\in Z(M)=\mathbb{C}1$ holds. Since $u-1\in L^2(M,\tau)$, this forces $u=1$. Therefore the center of $\mathcal{U}(M)_2$ is $\{1\}$, while the center of $\mathcal{U}(N)$ contains $\mathbb{C}1$. 
\end{proof}
\begin{remark}
We thank the referee for telling us the above simple proof and the literature \cite{Feldman}.
\end{remark}

%

\subsection{Other Known Examples}
The class $\mathscr{U}_{\text{fin}}$ has not been studied well. 
However, there are some known examples other than the ones presented in $\S 2.6$.
\begin{example}Normalizer groups $\mathcal{N}_M(A)$ and $\mathcal{N}(E)$\\
Let $A$ be an abelian von Neumann subalgebra of a separable II$_1$ factor $M$. The {\it normalizer group} $\mathcal{N}_M(A)$ of $A$,  defined by 
\[\mathcal{N}_M(A):=\{u\in \mathcal{U}(M); uAu^*=A\},\]
is clearly a strongly closed subgroup of $\mathcal{U}(M)$ and hence belongs to $\mathscr{U}_{\text{fin}}$. This group has been drawn much attention to specialists, especially when $A$ is maximal abelian and $\mathcal{N}_M(A)$ generates $M$ as a von Neumann algebra. In such a case, $A$ is called a {\it Cartan subalgebra}. Similarly, the {\it normalizer group} $\mathcal{N}(E)$ for a normal faithful conditional expectation $E:M\to N$ onto a von Neumann subalgebra $N$,
\[\mathcal{N}(E):=\{u\in \mathcal{U}(M);uE(x)u^*=E(uxu^*), \text{ for all }x\in M\}\]
is also of finite type.
\end{example}
\begin{example}The full group $[\mathcal{R}]$\\
Let $\mathcal{R}$ be a II$_1$ countable equivalence relation on a standard probability space $(X,\mu)$. A. Furman showed that the full group $[\mathcal{R}]$ equipped with so-called {\it uniform topology} is a Polish group of finite type (see $\S 2$ of Furman \cite{Furman}).
\end{example}

\section{Hereditary Properties of Finite Type Groups}
In this section, we discuss several permanence properties of the class $\mathscr{U}_{\text{fin}}$ under several algebraic operations. 
In summary, we will observe the following permanence properties of finite type groups. 
\begin{center}
\begin{tabular}{|c||c|} \hline
Operation & $\mathscr{U}_{\text{fin}}$? \\\hline
Closed subgroup $H<G$ & YES \\\hline
Countable direct product $\prod_{n\ge 1}G_n$ & YES \\\hline
Semidirect product $G\rtimes H$ & NO \\\hline
Quotient $G/N$ & NO \\\hline
Extension $1\to N\to G\to K\to 1$ & NO \\\hline
Projective limit $\displaystyle \lim_{\longleftarrow }G_n$ & YES \\\hline
\end{tabular}
\end{center}
As can be seen from the above table, finiteness property is delicate and can easily be broken under natural operations. 
\begin{remark} (On the ultraproduct of metric groups) Let $\{(G_n,d_n)\}_{n=1}^{\infty}$ be a sequence of finite type Polish groups with a compatible bi-invariant metric. It is not difficult to show that the ultraproduct $(G_{\omega},d_{\omega})$ of $\{(G_n,d_n)\}_{n=1}^{\infty}$ along a free ultrafilter $\omega \in \beta \mathbb{N}\setminus \mathbb{N}$ is a completely metrizable topological group of finite type, but not Polish in general. We will discuss topological groups which are embeddable into the unitary group of a (not necessarily separable) finite von Neumann algebra elsewhere. 
\end{remark}

\subsection{Closed Subgroup and Countable Direct Product}
It is clear the class $\mathscr{U}_{\text{fin}}$ is closed under taking a closed (or even $G_{\delta}$) subgroup. Since a countable direct sum of separable finite von Neumann algebras is again separable and finite, the class $\mathscr{U}_{\text{fin}}$ is closed under  countable direct product. 
\subsection{Extension and Semidirect Product}
The class $\mathscr{U}_{\text{fin}}$ is not closed under extension nor semidirect product.
\begin{proposition}
There exits a Polish group $G$ not of finite type, which has a closed normal subgroup $N$ such that $N$ and the quotient group $G/N$ are of finite type.
\end{proposition}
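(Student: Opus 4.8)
The plan is to exhibit the $ax+b$ group as the required counterexample, since Example \ref{yasu ax+b group} already establishes that this group is not of finite type (indeed it admits no compatible bi-invariant metric, hence is not even SIN). What then remains is only to locate inside it an extension whose kernel and quotient are both of finite type, and the explicit matrix description makes every such verification routine.

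Concretely, I would take $G$ to be the $ax+b$ group over $\mathbb{K}=\mathbb{R}$ and let
\[
N := \left\{\begin{pmatrix} 1 & y \\ 0 & 1\end{pmatrix} ; y \in \mathbb{R}\right\}
\]
be the subgroup of translations. First I would check that $N$ is a closed normal subgroup: the conjugation formula computed in Example \ref{yasu ax+b group} shows that conjugating $(1,y)$ by $(a,b)$ yields $(1,ay)$, which remains in $N$, so $N \triangleleft G$; and $N$ is plainly closed in the subspace topology. This produces a short exact sequence $1 \to N \to G \to G/N \to 1$.

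Next I would verify that both outer terms of this extension lie in $\mathscr{U}_{\text{fin}}$. The kernel $N$ is isomorphic as a topological group to the additive group $\mathbb{R}$, which is an abelian second countable locally compact Hausdorff group, hence of finite type by Example \ref{yasu abelian group}. The quotient $G/N$ is isomorphic, via the map induced by $(x,y)\mapsto x$, to the multiplicative group $\mathbb{R}^{\times}$, again abelian second countable locally compact, hence of finite type by the same example; here one only needs to confirm that the quotient topology really coincides with the standard topology on $\mathbb{R}^{\times}$, which is immediate. Since $G$ itself is not of finite type, the sequence $1 \to N \to G \to G/N \to 1$ witnesses that $\mathscr{U}_{\text{fin}}$ fails to be closed under extensions, and the identification $G \cong \mathbb{R} \rtimes \mathbb{R}^{\times}$ simultaneously exhibits failure under semidirect products.

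There is no genuinely hard step here; the entire content lies in recognizing that the $ax+b$ group, already analyzed earlier for a different purpose, simultaneously furnishes a non-SIN group sitting in an extension with abelian locally compact kernel and quotient. The only points requiring care are the elementary verifications that $N$ is closed and normal and that the quotient topology is the expected one, both of which follow at once from the explicit description of $G$.
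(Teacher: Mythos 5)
Your proposal is correct and follows exactly the paper's own argument: the paper likewise takes $G$ to be the $ax+b$ group (not of finite type by Example \ref{yasu ax+b group}), writes it as the semidirect product $\mathbb{K}\rtimes\mathbb{K}^{\times}$, and uses the exact sequence $0\to\mathbb{K}\to G\to\mathbb{K}^{\times}\to 1$ with both outer terms of finite type by Example \ref{yasu abelian group}. Your only addition is spelling out the routine verifications (normality and closedness of $N$, identification of the quotient topology), which the paper leaves implicit.
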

\begin{proof}
Let $G$ be the $ax+b$ group (see Example \ref{yasu ax+b group}). 
Since $G$ does not have a compatible bi-invariant metric, it is not of finite type. 
On the other hand, $G$ can be written as a semidirect product $G=\mathbb{K}\rtimes \mathbb{\mathbb{K}}^{\times}$, where $\mathbb{K}^{\times}$ acts on $\mathbb{K}$ as a multiplication. There fore the exact sequence 
\[0\longrightarrow \mathbb{\mathbb{K}}\longrightarrow G\longrightarrow \mathbb{K}^{\times}\longrightarrow 1\]
gives a counter example for extension case.
\end{proof}
Note that the above example also shows that the class $\mathscr{U}_{\text{fin}}$ is not closed under semidirect product. 
\subsection{Quotient}
The class $\mathscr{U}_{\text{fin}}$ is not closed under quotient.
\begin{proposition}\label{4quotient}
There exists an abelian Polish groups of finite type $G$ such that the quotient $G/N$ of $G$ by its closed subgroup is not of finite type.  
\end{proposition}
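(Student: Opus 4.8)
The plan is to realize a separable Banach space that fails to be of finite type as a topological-group quotient of one that is of finite type, exploiting the universal quotient property of $l^1$. Concretely, I would take $G := l^1$, which is an abelian Polish group of finite type by Example \ref{yasu lp spaces} (the case $p=1$ falls in the range $1\le p\le 2$). The goal is then to produce a closed subgroup $N\le l^1$ with $l^1/N\cong C[0,1]$ as topological groups; since $C[0,1]$ is not of finite type by Example \ref{yasu c01 space}, this furnishes exactly the desired counterexample.

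First I would invoke the classical fact that every separable Banach space is a quotient of $l^1$, applied here to $C[0,1]$. Choosing a sequence $\{x_n\}_{n\ge 1}$ dense in the closed unit ball of $C[0,1]$, I would define a bounded linear map $T:l^1\to C[0,1]$ by $T((a_n)_n):=\sum_n a_n x_n$, which satisfies $\|T\|\le 1$. Because $\{x_n\}_n$ is dense in the unit ball, the image of the unit ball of $l^1$ under $T$ is dense in the unit ball of $C[0,1]$, and the standard surjectivity lemma underlying the open mapping theorem then shows that $T$ is surjective and open. Setting $N:=\ker{T}$, which is a closed linear subspace and in particular a closed subgroup of $l^1$, the open mapping theorem yields a topological linear isomorphism $l^1/N\cong C[0,1]$.

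The only point requiring care is that the Banach-space quotient $l^1/N$ coincides with the quotient as an additive topological group: the quotient topology on $l^1/N$ is precisely the norm topology of the Banach-space quotient, so the induced bijection $l^1/N\to C[0,1]$ is an isomorphism of topological groups. Granting this identification, $G/N\cong C[0,1]$ is a Polish group that is not of finite type, while $G=l^1$ is abelian and of finite type, which completes the argument. I expect the main (though routine) obstacle to be the verification of surjectivity and openness of $T$ together with the matching of the two notions of quotient; the conceptual weight is carried entirely by the universal property of $l^1$ and the non-representability of $C[0,1]$.
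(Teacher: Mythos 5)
Your proposal is correct and takes essentially the same approach as the paper: the paper also sets $G=\ell^1$ and invokes the universal quotient property of $\ell^1$ over separable Banach spaces (citing Theorem 5.1 of \cite{Fabian} for the open-mapping details you spell out), the only difference being that its target is $\ell^3$ (not of finite type by Example \ref{yasu lp spaces}) rather than your $C[0,1]$ (Example \ref{yasu c01 space}). Since the paper's proof that $C[0,1]$ is not of finite type is itself deduced from the $\ell^3$ case, the two choices of target are interchangeable and the arguments coincide in substance.
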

\begin{proof}
Consider the separable Banach space $A:=l^3$ as an additive Polish group. As we saw in Example \ref{yasu lp spaces}, $l^p (1\le p\le \infty)$ is unitarily representable if and only if $1\le p\le 2$. On the other hand, every separable Banach space is isomorphic onto a quotient Banach space of $\ell^1$ (see e.g., Theorem 5.1 of \cite{Fabian}). In particular, although not of finite type, $A=\ell^3$ is a quotient of $G:=\ell^1$ by its closed subgroup $N$. 
\end{proof} 
\begin{remark}
Note that even for abelian Polish groups, the situation can be worst possible. It is known (chapter 4 of \cite{Ban83}) that there exists an abelian Polish group $A$ which has no non-trivial unitary representation. Such a group is called {\it strongly exotic}.  On the other hand, S. Gao and V. Pestov \cite{GaoPestov} proved that any abelian Polish group is a quotient of $\ell^1$ by a closed subgroup $N$. Therefore, strongly exotic groups are also quotients of finite type Polish groups. 
\end{remark}
\subsection{Projective Limit}
The class $\mathscr{U}_{\text{fin}}$ is closed under projective limit. 
\begin{proposition}\label{prolim}
Let $\{G_n, j_{m,n}:G_m\to G_n (n\le m)\}_{n,m=1}^{\infty}$ be a projective system of Polish groups of finite type. 
Then $\displaystyle G=\lim_{\longleftarrow }G_n$ is a Polish group of finite type.
\end{proposition}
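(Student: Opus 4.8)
The plan is to realize $G$ as a closed subgroup of the full countable product $P:=\prod_{n=1}^{\infty}G_n$ and then to invoke the two permanence properties already established in Section~4.1: that $\mathscr{U}_{\text{fin}}$ is closed under countable direct products and under passage to closed subgroups. Since each $G_n$ is a Polish group of finite type, $P$ is again a Polish group of finite type, so it suffices to exhibit $G$ as a closed subgroup of $P$.

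First I would recall the standard model of the projective limit, namely
\[
G=\lim_{\longleftarrow}G_n=\left\{(g_n)_n\in P : j_{m,n}(g_m)=g_n\ \text{ for all } n\le m\right\},
\]
equipped with the subspace topology from $P$; this is visibly a subgroup of $P$ because each bonding map $j_{m,n}$ is a group homomorphism. To see that $G$ is closed, fix $n\le m$. The coordinate projections $\pi_k:P\to G_k$ are continuous, so the map
\[
\Phi_{m,n}:P\to G_n\times G_n,\qquad \Phi_{m,n}\big((g_k)_k\big):=\big(j_{m,n}(g_m),\, g_n\big),
\]
is continuous, being built from continuous homomorphisms. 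Because $G_n$ is Hausdorff, its diagonal $\Delta_n\subset G_n\times G_n$ is closed, so $C_{m,n}:=\Phi_{m,n}^{-1}(\Delta_n)$ is closed in $P$. Finally $G=\bigcap_{n\le m}C_{m,n}$ is an intersection of closed sets and hence closed.

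With $G$ identified as a closed subgroup of the finite-type Polish group $P$, the closure of $\mathscr{U}_{\text{fin}}$ under closed subgroups immediately yields that $G$ is of finite type; that $G$ is Polish follows as well, being a closed subgroup of the Polish group $P$. Strictly speaking one could reduce the defining relations to the consecutive ones $j_{n+1,n}(g_{n+1})=g_n$, since coherence of the system propagates these to all pairs $n\le m$, but this simplification is inessential. The main point is that there is essentially no obstacle here: once the projective limit is placed inside the product, both nontrivial steps—finiteness of the product and of closed subgroups—are supplied by Section~4.1, and the only genuine verification is the elementary topological fact that the coherence relations cut out a closed set, resting on continuity of the bonding maps together with the Hausdorff property of each $G_n$.
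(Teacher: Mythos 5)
Your proof is correct and follows exactly the paper's own route: embed $G$ as a closed subgroup of the product $\prod_{n}G_n$ and apply the permanence of $\mathscr{U}_{\text{fin}}$ under countable products and closed subgroups from Section~4.1. The only difference is that you spell out the closedness of the coherence relations via preimages of diagonals, a detail the paper leaves as ``clear.''
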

\begin{proof}
Since the connecting map $\{j_{m,n}\}$ is continuous, it is clear that $G$ can be seen as a closed subgroup of $\prod_{n \in \mathbb{N}}G_{n}$. Since finiteness property passes to direct product, $\prod_{n \in \mathbb{N}}G_{n}$ is also a Polish group of finite type. Therefore its closed subgroup $G$ is also a Polish of finite type. 
\end{proof}

\section{Some Questions}
Finally let us discuss some questions to which we do not have answers at this stage.  
Let $\mathscr{U}_{\text{inv}}$ denote the class of Polish groups with a compatible bi-invariant metric. 
As we saw in Example \ref{yasu l^p positive definite}, $\mathscr{U}_{\text{inv}}$ is strictly larger than $\mathscr{U}_{\text{fin}}$ ($l^3$ is in $\mathscr{U}_{\text{inv}}$ but not in $\mathscr{U}_{\text{fin}}$). Therefore the unitarily representability is indispensable (this was also pointed out by Popa). Furthermore, there exists a more interesting example. Recently L. van den Dries and S. Gao \cite{GaoDries} constructed a Polish group $G$ with a compatible bi-invariant metric, which does not have Lie sum (see \cite{GaoDries} for the definition). On the other hand, we proved in \cite{AndoMats} that if $G$ belongs to the class $\mathscr{U}_{\text{fin}}$, then $G$ has Lie sum. Thus $G$ is not of finite type. 
Therefore it would be desirable to consider the following questions (the latter was posed in Popa\cite{Popa}, $\S 6.5$):
\begin{question} Is van den Dries-Gao's Polish group unitarily representable?
\end{question}

\begin{question}[Popa]\label{hiro question}
Is a unitarily representable Polish SIN-group of finite type? 
\end{question}

Hopefully Theorem \ref{yasu characterization of finite type} will play the role for solving the above questions.
Also, since $l^p$ belongs to $\mathscr{U}_{\text{fin}}$ if and only if $1\le p\le 2$, it is worth considering whether
\begin{question}
Let $\mathcal{H}$ be a separable infinite-dimensional Hilbert space. Does $\mathcal{U}(\mathcal{H})_p:=\{u\in \mathcal{U}(\mathcal{H}); 1-u\in S^p(\mathcal{H})\}$ belong to $\mathscr{U}_{\text{fin}}$ for some $1\le p<2$ ? Here $S^p(\mathcal{H})$ denotes the space of Schatten $p$-class operators.
\end{question}

Finally, let us remind that there is another candidate for a counterexample to Question \ref{hiro question}. 
Recall that a finite von Neumann algebra $N$ equipped with a normal faithful tracial state $\tau$ is said to have {\it property (T)} if for each $\varepsilon>0$, there exists a finite set $\mathcal{F}\subset N$ and $\delta>0$ with the property that whenever $\varphi:N\to N$ is a unital completely positive $\tau$-preserving map satisfying $||\varphi(x)-x||_2<\delta$ for all $x\in \mathcal{F}$, then $||\varphi(a)-a||_2\le \varepsilon ||a||$ holds for all $a\in N$. 

Let $M$ be a separable II$_1$ factor with property (T), ${\rm Aut}(M)$ be a Polish group of all *-automorphisms of $M$ equipped with the pointwise $||\cdot ||_2$-convergence topology. Due to the property (T), this topology coincides with the topology of uniform $||\cdot ||_2$-convergence on the closed unit ball $M_1$. Since the latter topology is given by the bi-invariant metric $d$ defined by
\[d(\alpha,\beta):=\sup_{x\in M_1}||\alpha(x)-\beta(x)||_2,\ \ \ \alpha,\beta \in {\rm Aut}(M),\]
${\rm Aut}(M)$ is a Polish SIN-group. By considering the standard representation, ${\rm Aut}(M)$ is unitarily representable as well. Therefore it would be interesting to check if ${\rm Aut}(M)$ is actually of finite type or not. 
\subsection*{Acknowledgements}
The authors would like to express their thanks to Professor Asao Arai, Professor Uffe Haagerup, Professor Izumi Ojima and Professor Konrad Schm\"{u}dgen for their important comments, discussions and supports. We also thank to Mr. Takahiro Hasebe, Mr. Ryo Harada, Mr. Kazuya Okamura and Dr. Hayato Saigo for the discussions in a seminar and for their continual interests in our work. The second named author thanks to Mr. Abel Stolz for his kind discussions.  
Final version of the paper was done during authors' visit to the conference ``Von Neumann algebras and ergodic theory of group actions 2011" at Institut Henri Poincar\'e. They thank the organizers Professor Damien Gaboriau,  Professor Sorin Popa and Professor Stefaan Vaes for their fruitful discussions and also to Professor Jesse Peterson for informing us about the Popa and his recent results concerning cocycle superrigidity and the class $\mathscr{U}_{\text{fin}}$.
Last but not least, we thank the anonymous referee for telling us the literature and suggesting a simpler proof of some of the main results.  
Both of the authors are supported by Research fellowships of the Japan Society for the Promotion of Science for Young Scientists.


\begin{thebibliography}{99}
\bibitem{AndoMats} H. Ando, Y. Matsuzawa, Lie group-Lie algebra correspondences of unitary groups in finite von Neumann algebras, to appear in Hokkaido Math. J.
\bibitem{Ban83} W. Banaszczyk, {\it Additive subgroups of topological vector spaces}, Lecture Notes in Mathematics, 1466. Springer-Verlag, Berlin, 1991.
\bibitem{Daniel} D. Beltita, Lie theoretic significance
of the measure topologies associated with a finite trace, Forum
Math. {\bf 22} (2010), 241--253.
\bibitem{Dixmier} J. Dixmier, {\it C$^*$-algebras}, North-Holland Mathematical Library vol. 15, 1977. 
\bibitem{GaoDries} L. van den Dries, S. Gao, A Polish group without Lie sums, Abh. Math. Sem. Hambourg 
 \textbf{79} (2009) 135--147.
\bibitem{Fabian} M. Fabian, P. Habala, P. H\'{a}jek, V. Montesinos, V. Zizler, {\it Banach Space Theory: The Basis for Linear and Nonlinear Analysis}, CMS Books in Mathematics/Ouvrages de Math\'{e}matiques de la SMC. Springer, New York, 2011.
\bibitem{Feldman} J. Feldman, Isomorphisms of finite type II rings of operators, Ann. Math. (2), \textbf{63}, (1956), 565--571.
\bibitem{Furman} A. Furman, On Popa's Cocycle Superrigidity Theorem, Int. Math. Res. Not. IMRN (2007), 1--46.
\bibitem{Galindo} J. Galindo, On group and semigroup compactifications of topological groups, Online Note\\ http://www.mat.ucm.es/imi/documents/topologicalGroups/JGalindo.pdf
\bibitem{Gao} S. Gao, Unitary group actions and Hilbertian Polish metric spaces. in {\it Logic and its applications}, Contemp. Math. \textbf{380}, AMS Providence, RI (2005), 53--72. 
\bibitem{GaoPestov} S. Gao, V. Pestov, On a universality property of some abelian Polish groups, Fund. Math. \textbf{179} (2003), 1--15. 
\bibitem{Takeshi} T. Hirai, E, Hirai, Positive definite class functions on a topological group and characters of factor representations, J. Math. Kyoto. Univ. {\bf 45} (2005), 355--379.
\bibitem{hofmann}K. Hofmann, S. Morris and M. Stroppel, Varieties of topological groups, Lie groups and SIN-groups, 
Colloq. Math. {\bf 70} (1996), 151--163.
\bibitem{KadSin} R. V. Kadison, I. M. Singer, Some remarks on representations of connected groups, Proc. Nat. Acad. Sci. \textbf{38} (1952), 419--423.  
\bibitem{Meg} M. Megrelishvili, Reflexively but not unitarily representable topological groups, Topol. Proc. \textbf{25} (2000), 615--625.
\bibitem{Peterson} J. Peterson, T. Sinclair, On cocycle superrigidity for Gaussian actions, arXiv:0910.3958, (2009).
\bibitem{Nelson} E. Nelson, Notes on non-commutative integration, J. Funct. Anal. {\bf 15} (1974), 103--116.
\bibitem{Popa} S. Popa, Cocycle and orbit equivalence superrigidity for malleable actions of $w$-rigid groups, Invent. Math. \textbf{170} (2007), 243--295. 
\bibitem{Schoenberg} I. J. Schoenberg, Metric spaces and positive definite functions, Trans. Amer. Math. Soc. \textbf{44} (1938), 522--536.
\bibitem{Tak} M. Takesaki, {\it Theory of Operator Algebras}. I (2002), II (2003) and III (2003). Springer-Verlag, Berlin.
\end{thebibliography}
\end{document}